\def\0{\emptyset}
\newtheorem{theorem}{Theorem}[section]
\newtheorem{definition}[theorem]{Definition}
\newtheorem{lemma}[theorem]{Lemma}
\newtheorem{claim}[theorem]{Claim}
\newtheorem{observation}[theorem]{Observation}
\newtheorem{cor}[theorem]{Corollary}
\newtheorem{conjecture}[theorem]{Conjecture}
\newtheorem{proposition}[theorem]{Proposition}
\newenvironment{proof}{\noindent\textit{Proof.}}{\hfill $\square$\par}
\begin{document}


\title{Linear recoloring diameter of degenerate chordal graphs and bounded treewidth graphs}
\author{

    {\small\bf Yichen Wang}\thanks{email:  wangyich22@mails.tsinghua.edu.cn},\quad
    {\small\bf Mei Lu}\thanks{email: lumei@tsinghua.edu.cn}\\
    {\small Department of Mathematical Sciences, Tsinghua University, Beijing 100084, China.}\\
}

\date{}

\maketitle\baselineskip 16.3pt

\begin{abstract}
	Let $G$ be a graph on $n$ vertices and $t$ an integer. The reconfiguration graph of $G$, denoted by $R_t(G)$, consists of all $t$-colorings of $G$ and two $t$-colorings are adjacent if they differ on exactly one vertex.
    The $t$-recoloring diameter of $G$ is the diameter of $R_t(G)$.
    For a $d$-degenerate graph $G$, $R_t(G)$ is connected when $t \ge d+2$~(Dyer, Flaxman, Frieze and Vigoda, 2006).
    Furthermore, the $t$-recoloring diameter is $O(n^2)$ when $t \ge 3(d+1)/2$~(Bousquet and Heinrich, 2022), and it is $O(n)$ when $t \ge 2d+2$~(Bousquet and Perarnau, 2016).
    For a $d$-degenerate and chordal graph $G$, the $t$-recoloring diameter of $G$ is $O(n^2)$ when $t \ge d+2$~(Bonamy, Johnson, Lignos, Patel and Paulusma, 2014).

    If $G$ is a graph of treewidth at most $k$, then $G$ is also $k$-degenerate, and the previous results hold.
    Moreover, when $t \ge k+2$, the $t$-recoloring diameter is $O(n^2)$~(Bonamy and Bousquet, 2013).
    When $k=2$, the $t$-recoloring diameter of $G$ is linear when $t \ge 5$~(Bartier, Bousquet and Heinrich, 2021) and the result is tight.

    In this paper, we prove that if $G$ is $d$-degenerate and chordal, then the $t$-recoloring diameter of $G$ is $O(n)$ when $t \ge 2d+1$.
    Moreover, if the treewidth of $G$ is at most $k$, then the $t$-recoloring diameter is $O(n)$ when $t \ge 2k+1$.
    This result is a generalization of the previous results on graphs of treewidth at most two.
\end{abstract}


{\bf Keywords:}  recoloring; reconfiguration graph; treewidth; degenerate; chordal
\vskip.3cm

\section{Introduction}

Reconfiguration problems involve finding step-by-step transformations between two feasible solutions such that all intermediate results are also feasible. Let $G$ be a graph on $n$ vertices and $t$ an integer. A \textbf{(proper) $t$-coloring} of $G$ is a function $f: V(G) \rightarrow \{1,\ldots,t\}$ such that for every edge $xy$, $f(x) \neq f(y)$. In this paper, we always discuss proper colorings, so we omit the word ``proper'' for convenience.

We define the configuration graph of $G$ by $R_t(G)$, whose vertices are $t$-colorings of $G$. Two $t$-colorings are \textbf{adjacent} if they differ on exactly one vertex.
We say the graph $G$ is \textbf{$t$-mixing} if $R_t(G)$ is connected. The \textbf{$t$-recoloring diameter} of $G$ is the diameter of $R_t(G)$ denoted by $diam(R_t(G))$. In other words, the $t$-recoloring diameter is the minimum $D$ for which any $t$-coloring of $G$ can be transformed into any other through a sequence of adjacent $t$-coloring of length at most $D$. 
In the following, when we say transforming $\alpha$ to $\beta$, we mean transforming the $t$-coloring $\alpha$ to the $t$-coloring $\beta$ by recoloring one vertex at a time such that all intermediate colorings are also $t$-colorings.
When $G$ is not $t$-mixing~($R_t(G)$ is disconnected), we denote the $t$-recoloring diameter by $+\infty$.
We say the $t$-recoloring diameter of $G$ is at most quadratic if $diam(R_t(G)) = O(n^2)$ and at most linear if $diam(R_t(G)) = O(n)$.
Much previous researches focused on determining the asymptotics of $diam(R_t(G))$~(Linear, quadratic, or even larger) for various graph classes. 
We are also interested in finding the phase transition of $t$ when the $t$-recoloring diameter changes from infinite to finite, from finite to at most quadratic, and from at most quadratic to at most linear.
Note that linear $t$-recoloring diameter is the best we can obtain since each vertex has to be recolored at least once in some cases, for example, every vertex is colored with a different color in two colorings.

The graphs with bounded maximum degree have been extensively studied.
Let $G$ be a graph on $n$ vertices with maximum degree $\Delta(G)$.
Note that, when $t \le \Delta(G) + 1$, there exists a graph $G$ such that $R_t(G)$ is disconnected~(for example, $K_{t+1}$).
Jerrum~\cite{jerrum1995very} proved that if $t \ge \Delta(G) + 2$, then $G$ is $t$-mixing.
Cereceda~\cite{cereceda2007mixing} proved that $diam(R_t(G))= O(n\Delta) = O(n^2)$ when $t \ge \Delta(G) + 2$.
Cambie, Cames van Batenburg and Cranston~\cite{CAMBIE2024103798} studied the exact value of $diam(R_t(G))$ rather than asymptotics, and proved $diam(R_t(G)) \le n + 2\mu(G) \le 2n$ when $t \ge \Delta(G) + 2$, where $\mu(G)$ is the matching number of $G$.


The graph class of $d$-degenerate graphs~(the definition is given in Section~\ref{sec: preliminary}) has also been widely studied.
Also, when $t \le d+1$, $K_{t+1}$ is a $d$-degenerate graph such that $R_t(K_{t+1})$ is disconnected.
Let $G$ be a $d$-degenerate graph on $n$ vertices, Dyer, Flaxman, Frieze and Vigoda~\cite{Dyer2006} proved that $G$ is $t$-mixing when $t \ge d+2$.
Cereceda~\cite{cereceda2007mixing} conjectured the following.
\begin{conjecture}[Cereceda~\cite{cereceda2007mixing}, Conjecture 5.21]\label{conj: cereceda}
    Let $G$ be a $d$-degenerate graph.
    The $t$-recoloring diameter of $G$ is at most quadratic when $t \ge d+2$.
\end{conjecture}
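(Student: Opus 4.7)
The plan is to attempt induction on $n$ using the degeneracy ordering. Fix an ordering $v_1,v_2,\ldots,v_n$ of $V(G)$ such that every $v_i$ has at most $d$ neighbors among $\{v_{i+1},\ldots,v_n\}$; in particular $v_1$ has degree at most $d$ in $G$. Given two $t$-colorings $\alpha$ and $\beta$ with $t \ge d+2$, I would try to transform $\alpha$ into a coloring $\alpha'$ with $\alpha'(v_1)=\beta(v_1)$ using $O(n)$ recolorings, then delete $v_1$ and recurse on the $d$-degenerate graph $G-v_1$. If every reduction phase costs $O(n)$, the total is $O(n^2)$, matching the conjectured bound.

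The heart of the argument is the \emph{color-freeing} subroutine. Let $c=\beta(v_1)$. If no neighbor of $v_1$ is currently colored $c$, we just recolor $v_1$ and are done. Otherwise there is a unique neighbor $u$ with $\alpha(u)=c$, and we must first move $u$ to some other color. Because $\deg_G(v_1)\le d$ and $t\ge d+2$, at $v_1$ there are at least two colors free; the subtlety is that at $u$ only the inequality $t\ge d+2$ is not quite sharp enough to guarantee a free color other than $c$ among all of $u$'s neighbors, so one must then recursively free a color at some further vertex. This generates a rooted ``conflict tree'' of recolorings. I would try to bound the size of this tree by $O(n)$ via a carefully chosen potential function — for example, a weighted count of disagreements with $\beta$ where weights depend on position in the degeneracy ordering, or an entropy-like quantity measuring how many colors are ``locally available'' at each vertex. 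The hope is that each recoloring in the conflict tree strictly decreases this potential, forcing the tree to terminate after $O(n)$ steps.

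The main obstacle, and the reason this is a long-standing open conjecture, is precisely controlling the size of the conflict tree when $t=d+2$. In this tight regime, freeing a color at $u$ may force freeing a color at every one of its $d$ neighbors (since $u$ has at most one free color other than $c$), and so on, which naively gives a tree of size $d^{\Theta(n)}$. The known quadratic bounds (Bonamy--Bousquet for treewidth $k$ with $t\ge k+2$; Bousquet--Heinrich for degeneracy $d$ with $t\ge 3(d+1)/2$) exploit either strong structural decompositions or an extra slack of $(d+1)/2$ colors to keep the cascade short; neither tool is available in the conjectured range. A natural attack would be to try to combine a chordal-decomposition style argument with a global amortization, perhaps by showing that if the cascade at $v_1$ ever grows long, then one can instead reorder the recoloring sequence so that vertices later in the ordering are processed first, ``paying'' for the cost from their own quota. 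I expect this reordering/amortization step to be the true bottleneck, and in the absence of a new structural insight I would not expect the induction as stated to close — which is consistent with the conjecture still being open in full generality.
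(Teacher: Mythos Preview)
The statement you were asked about is a \emph{conjecture}, not a theorem: the paper quotes Cereceda's conjecture verbatim and does not prove it. It explicitly records that only partial cases are known (trees, $t\ge\tfrac{3}{2}(d+1)$, chordal graphs, bounded treewidth) and leaves the general $t\ge d+2$ case open. So there is no ``paper's own proof'' to compare against.

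Your write-up is consistent with this: you do not claim a proof, you correctly isolate the cascade/conflict-tree explosion at $t=d+2$ as the obstruction, and you note that the existing quadratic results all rely on either extra color slack or structural hypotheses (chordality, bounded treewidth) that are unavailable in the bare degenerate setting. That diagnosis matches the state of the art as summarized in the paper. There is no gap to name beyond the one you already identify --- the conjecture is genuinely open, and your sketch is an honest outline of why the obvious inductive approach stalls rather than a proof proposal in the usual sense.
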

Cereceda~\cite{cereceda2007mixing} confirmed the conjecture when $t \ge 2d+1$.
Bonamy, Johnson, Lignos, Patel and Paulusma~\cite{bonamy2014reconfiguration} proved the conjecture for trees~($d=1$).
Bousquet and Heinrich~\cite{bousquet2022polynomial} improved the conclusion that when $t \ge \frac{3}{2}(d+1)$, the $t$-recoloring diameter of $G$ is at most quadratic.
Bousquet and Perarnau~\cite{bousquet2016fast} proved when $t \ge 2d+2$, the $t$-recoloring diameter is at most linear.

A graph $G$ is \textbf{chordal} if every cycle of length at least four has a chord.
We say $G$ is $r$-colorable, if its chromatic number $\chi(G) $ is at most $r$.
Bonamy, Johnson, Lignos, Patel and Paulusma~\cite{bonamy2014reconfiguration} proved if $G$ is chordal and $r$-colorable, then the $t$-recoloring diameter of $G$ is at most quadratic when $t \ge r+1$.
Since a $d$-degenerate graph is $(d+1)$-colorable, we have that if $G$ is $d$-degenerate and chordal, then the $t$-recoloring diameter is at most quadratic when $t \ge d+2$ which proves the conjecture of Cereceda~under the condition of a chordal graph.
The condition of the chordal graph can not be removed since Bonamy and Bousquet~\cite{bonamy2018recoloring} showed for each $t \ge 3$, there exists a bipartite~($2$-colorable) graph $G$ such that $R_t(G)$ is disconnected.
If we treat the maximum degree as constant, Bousquet and Bartier~\cite{bousquet2019linear} proved for a $d$-degenerate and chordal graph $G$, its $t$-recoloring diameter is $O(f(\Delta)n)$ when $t \ge d+4$, where $\Delta$ is the maximum degree.

Graphs with forbidden subgraphs, and in particular $P_\ell$-free graphs have also been studied.
For $P_4$-free graphs, Bonamy and Bousquet~\cite{bonamy2018recoloring} proved for each $r \ge 1$, if $G$ is $r$-colorable and $P_4$-free, then $G$ is $t$-mixing when $t \ge r+1$.
For $P_6$-free graphs, Bonamy and Bousquet~\cite{bonamy2018recoloring} also proved for each $t \ge 3$, there exists a $2$-colorable $P_6$-free graph $G$ such that $R_t(G)$ is disconnected.
Lei, Ma, Miao, Shi and Wang~\cite{2024arXiv240919368L} proved for any $t \ge r+1 \ge 3$, there exists a $r$-colorable $P_6$-free graph $G$ such that $R_t(G)$ is disconnected.
It means that for every $r \ge 2$, there is no general $T=T(r)$ such that when $t \ge T$, every $P_6$-free $r$-colorable graph $G$ is $t$-mixing.
Then the only remaining case is $P_5$-free graphs.
Bonamy, Johnson, Lignos, Patel and Paulusma~\cite{bonamy2014reconfiguration} proved if $G$ is $P_5$-free and $2$-colorable, then $G$ is $t$-mixing when $t \ge 3$.
Feghali and Fiala~\cite{feghali2020reconfiguration} proved if $G$ is $(P_5, C_5, \overline{P_5})$-free and $3$-colorable, then $G$ is $t$-mixing when $t \ge 4$.
Here $\overline{P_5}$ is the complement graph of $P_5$.
Feghali and Merkel~\cite{feghali2022mixing} proved for every positive integer $p$, there exists a $7p$-colorable $P_5$-free graph such that $R_{8p}(G)$ is disconnected.
Lei, Ma, Miao, Shi and Wang~\cite{2024arXiv240919368L} proved for a $3$-colorable $P_5$-free graph $G$, $R_t(G)$ is connected when $t \ge 4$.
Lei, Ma, Miao, Shi and Wang~\cite{2024arXiv240919368L} also proved for each $r \ge 4$ and $r+1 \le t \le \binom{r}{2}$, there exists a $r$-colorable $P_5$-free graph $G$ such that $R_t(G)$ is disconnected.
The reconfiguration problem for $P_5$-free graphs is still open now.

Finally, graphs with bounded treewidth~(the definition of treewidth is given in Section~\ref{sec: preliminary}) have also been studied.
Treewidth is a stronger parameter than degeneracy.
It is known that a graph of treewidth at most $k$ must be $k$-degenerate.
For a graph $G$ of treewidth at most $k$, Bonamy and Bousquet~\cite{bonamy2013recoloring} proved when $t \ge k+2$, the $t$-recoloring diameter is at most quadratic which proves the conjecture of Cereceda under the condition of treewidth.
When $k=2$, Bartier, Bousquet and Heinrich~\cite{bartier2021recoloring} proved that when $t \ge 5$~(which equals $2k+1$), the $t$-recoloring diameter is at most linear\footnote{The conclusion also holds for $2$-degenerate and chordal graphs. The statement is not explicitly stated in their paper, but can be derived from the proof of Lemma~4 in~\cite{bartier2021recoloring}.}.
It is known that a graph is outerplanar if and only if it has treewidth at most two.
Bonamy, Johnson, Lignos, Patel and Paulusma~\cite{bonamy2014reconfiguration} proved the $4$-recoloring diameter for outerplanar graphs is quadratic.
It completely characterizes the recoloring diameter of outerplanar graphs.

In this paper, we are interested in that, for chordal and $d$-degenerate graphs, when the $t$-recoloring diameter is at most linear. 
We have the following main theorem which can be viewed as a generalization of results in~\cite{bartier2021recoloring}.

\begin{theorem}\label{thm: 2k+1 linear}
    Let $G$ be a $d$-degenerate and chordal graph and $t \ge 2d+1$.
    Given any two $t$-colorings $\alpha, \beta$ of $G$, we can transform $\alpha$ into $\beta$ by recoloring each vertex at most $c$ times, where $c = c(d)$ is a fixed constant only depending on $d$.
\end{theorem}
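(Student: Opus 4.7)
The approach would be inductive on a clique-tree decomposition of $G$. Since $G$ is $d$-degenerate and chordal, it admits a clique tree $T$ whose bags are the maximal cliques of $G$, each of size at most $d+1$. I would root $T$ at an arbitrary bag and, for each vertex $v$, define the \emph{home bag} $H_v$ to be the bag closest to the root among those containing $v$; this is well defined since the set of bags containing $v$ induces a subtree of $T$. With $t \ge 2d+1$ available colors, at every instant each clique $B$ has at least $t-|B|\ge d$ \emph{free} colors not appearing on $B$, and these free colors will play the role of parking slots throughout the argument.

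The first step would be to reduce the general problem to a canonical one: fix a $(d+1)$-coloring $\mu$ of $G$ obtained by greedy coloring along a perfect elimination ordering compatible with $T$, and prove that every $t$-coloring can be transformed into $\mu$ using at most $c'(d)$ recolorings per vertex. Composing the transformations $\alpha\leadsto\mu$ and $\mu\leadsto\beta$ (the latter reversed) then yields $\alpha\leadsto\beta$ with $c(d):=2c'(d)$ recolorings per vertex. To build $\alpha\leadsto\mu$, I would process the bags of $T$ in post-order: at each bag $B$, rectify the set $P(B)=\{v\in B : H_v=B\}$ to match $\mu|_{P(B)}$ without touching any vertex outside $P(B)$. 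Writing $S_B=B\cap\operatorname{parent}(B)$, the set $P(B)$ induces a clique of size at most $|B|-|S_B|$, and one verifies that the number of admissible colors for each $v\in P(B)$ is at least $|P(B)|+1$, so a standard token-sliding argument on the clique $P(B)$ rectifies it in $O(d)$ recolorings per vertex; since every vertex lies in $P(H_v)$ for exactly one bag, the total recoloring count is $O(d)$ per vertex.

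The main obstacle will be certifying that every intermediate recoloring during the rectification of $P(B)$ remains a proper $t$-coloring of all of $G$, not merely of the bag $B$. A vertex $v\in P(B)$ may have many neighbors lying in descendant bags that have already been fixed to their $\mu$-colors, and in principle the union of those colors could be large. The key structural fact is that a neighbor $u$ of $v$ outside $B$ must lie in some bag of the $v$-subtree $T_v$ rooted at $B$, hence $u$ was fixed to $\mu(u)$ during an earlier phase; since $\mu$ is itself a proper coloring, $\mu(v)$ automatically avoids all such $\mu(u)$, and the $\ge d$ free colors of $B$ provide enough room to perform the intermediate swaps needed to move $v$ from its current color to $\mu(v)$ through the clique $B$. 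Turning this local argument into a global schedule with a \emph{uniformly} bounded per-vertex recoloring count---in particular, handling the case when $P(B)$ is large relative to $S_B$ near the root, and ruling out any cascading conflict between the swap procedure on $B$ and the partially rectified colorings below---is the main technical hurdle, and is precisely where the threshold $t\ge 2d+1$ will be used.
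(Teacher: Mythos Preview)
Your approach is genuinely different from the paper's.  The paper never uses the clique tree for Theorem~\ref{thm: 2k+1 linear}; instead it runs the \emph{best choice recoloring algorithm} along a perfect elimination ordering and bounds, by a delicate pattern analysis (Observations~\ref{obs: tight recoloring}--\ref{obs: tight recoloring property 2}, Lemmas~\ref{lemma: save inequality}--\ref{lemma: refine sequence}, and an inductive invariant on ``naughty'' recolorings of $(d-1)$-cliques), the number of times each vertex is recolored, obtaining $c(d)=2^{18}d^7$.

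More importantly, your scheme has a real gap, and it is not the one you flagged.  When you rectify $P(B)$ in post-order, the vertices of the separator $S_B=B\cap\mathrm{parent}(B)$ still carry their original $\alpha$-colors, and you have explicitly forbidden yourself from touching them.  If some $w\in S_B$ satisfies $\alpha(w)=\mu(v)$ for a vertex $v\in P(B)$, it is impossible to give $v$ the target $\mu(v)$ without first moving $w$, so the rectification step cannot be completed.  A tiny instance already breaks the scheme: take $d=2$, $t=5$, bags $B_0=\{a,b,c\}$ and $B_i=\{b,c,d_i\}$ for $i=1,\dots,m$, rooted at $B_0$; set $\mu(d_i)=1$ and $\alpha(b)=1$.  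Then $b\in S_{B_i}$ blocks the target color of $d_i$ at every leaf bag.  Your ``key structural fact'' correctly handles neighbors in \emph{descendant} bags, but says nothing about $S_B$, and the ``$\ge d$ free colors of $B$'' observation does not help either: those colors let you \emph{park} $v$ temporarily, but the destination $\mu(v)$ itself may be permanently occupied on $S_B$.

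Switching to pre-order does not rescue the idea: then $S_B$ is already $\mu$-colored, but the neighbors of $v\in P(B)$ in descendant bags still carry arbitrary $\alpha$-colors and can collectively forbid every color except $\alpha(v)$, so $v$ cannot move at all.  Any salvage of the clique-tree route would need a new mechanism for the two-sided interaction between a bag and both its ancestors and its descendants; this is precisely the difficulty the paper sidesteps by abandoning the tree structure and analyzing the greedy recoloring sequence directly.
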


Using the techniques in~\cite{bartier2021recoloring} and combining Theorem~\ref{thm: 2k+1 linear}, we have the following corollary for graphs of bounded treewidth.

\begin{cor}\label{cor: 2k+1 linear}
    Let $G$ be a graph of treewidth at most $k$ and $t \ge 2k+1$.
    Given any two $t$-colorings $\alpha, \beta$ of $G$, we can transform $\alpha$ into $\beta$ by recoloring each vertex at most $c$ times, where $c = c(k)$ is a fixed constant only depending on $k$.
\end{cor}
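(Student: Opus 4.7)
The plan is to reduce Corollary~\ref{cor: 2k+1 linear} to Theorem~\ref{thm: 2k+1 linear} by passing to a chordal supergraph of $G$ with the same treewidth. Fix a tree decomposition of $G$ of width at most $k$ and let $G^{*}$ denote the chordal completion of $G$ obtained by turning every bag of the decomposition into a clique. Then $G^{*}$ is chordal, still has treewidth at most $k$, and is therefore $k$-degenerate. Every proper $t$-coloring of $G^{*}$ is automatically a proper $t$-coloring of $G$, and two such colorings that differ on exactly one vertex also differ on one vertex when viewed in $G$; in particular, $R_t(G^{*})$ embeds as a subgraph of $R_t(G)$, so any reconfiguration sequence we produce in $R_t(G^{*})$ transfers directly to $R_t(G)$.

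The key intermediate step, invoking the techniques of~\cite{bartier2021recoloring}, is the following lemma: whenever $t \ge 2k+1$, any proper $t$-coloring $\alpha$ of $G$ can be transformed inside $R_t(G)$ into a proper $t$-coloring $\alpha^{*}$ of $G^{*}$ by recoloring each vertex at most $c_1 = c_1(k)$ times. The natural approach roots the tree of the decomposition and processes the bags in a suitable order (e.g.\ leaves first), recoloring vertices inside each bag so that its members ultimately carry pairwise distinct colors while properness in $G$ is maintained throughout. The hypothesis $t \ge 2k+1$ provides enough slack to simultaneously avoid colors of current neighbours in $G$ and colors already used inside the current bag, and a careful amortised argument, directly generalising the proof of Lemma~4 of~\cite{bartier2021recoloring} from $k=2$ to arbitrary $k$, bounds the per-vertex recoloring count by a constant depending only on $k$.

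Once both $\alpha$ and $\beta$ have been transformed into proper $t$-colorings $\alpha^{*}$ and $\beta^{*}$ of the chordal graph $G^{*}$, Theorem~\ref{thm: 2k+1 linear} applies to $G^{*}$ and produces a reconfiguration from $\alpha^{*}$ to $\beta^{*}$ in $R_t(G^{*})$ recoloring each vertex at most $c(k)$ times. Interpreting this sequence inside $R_t(G)$ and concatenating with the two preliminary transformations $\alpha \to \alpha^{*}$ and $\beta^{*} \to \beta$ yields a reconfiguration from $\alpha$ to $\beta$ in which every vertex is recolored at most $2c_1(k) + c(k)$ times, as required. The main obstacle is the intermediate lemma: one must control the propagation of recoloring across the tree decomposition, so that a vertex appearing in several bags is not recolored more than a constant number of times in total. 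This is exactly the place where the machinery of~\cite{bartier2021recoloring} is deployed, and where lifting their $k=2$ accounting to arbitrary $k$ is the technically delicate part.
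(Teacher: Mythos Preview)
Your overall strategy (pass to a chordal, $k$-degenerate auxiliary graph and then invoke Theorem~\ref{thm: 2k+1 linear}) is the right one, but the reduction you propose hinges entirely on your ``intermediate lemma'' --- that any $t$-coloring of $G$ can be pushed, inside $R_t(G)$, to a proper coloring of the chordal completion $G^{*}$ with $c_1(k)$ recolorings per vertex --- and this lemma is neither proved nor obviously true. Your sketched argument says $t\ge 2k+1$ gives ``enough slack to simultaneously avoid colors of current neighbours in $G$ and colors already used inside the current bag''; but treewidth does not bound degree, so a vertex of $G$ may have all $2k+1$ colors in its neighbourhood and be completely frozen until you first recolor some of its neighbours. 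Controlling that cascade so that every vertex is touched only $O_k(1)$ times is essentially as hard as the corollary itself; it is not a routine extension of the $k=2$ argument in~\cite{bartier2021recoloring}, and you explicitly flag it as ``the technically delicate part'' without supplying the analysis. As written, this is a genuine gap.

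The paper sidesteps exactly this obstacle with a different reduction. Instead of trying to make $\alpha$ proper for the fixed chordal completion $G^{*}$, it builds a chordal $k$-degenerate graph $G''$ tailored to $\alpha$: in each bag of a width-$k$ tree decomposition, identify all vertices sharing the same $\alpha$-color, and then make each bag a clique. By construction $\alpha$ is already proper for $G''$, so no preliminary transformation is needed, and any recoloring sequence in $G''$ lifts to $G$ because each vertex of $G''$ corresponds to an independent set in $G$. Theorem~\ref{thm: 2k+1 linear} on $G''$ then carries $\alpha$ to some $(k+1)$-coloring $\gamma_1$ of $G$; doing the same for $\beta$ yields a $(k+1)$-coloring $\gamma_2$; and finally Proposition~\ref{prop: 2k+1 tw k} (from~\cite{bartier2021recoloring}) connects $\gamma_1$ to $\gamma_2$ with at most two recolorings per vertex. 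The identification trick is what makes the reduction cheap; your chordal-completion route would need a substantive new argument to close.
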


\begin{table}[t]
    \centering
    \begin{tabular}{|l|l|l|l|} 
    \hline 
    \multirow{2}{*}{Class} & \multicolumn{3}{c|}{Recoloring diameter}\\ \cline{2-4}
     & $t$-mixing      & at most quadratic & at most linear \\ \hline 
    $d$-degenerate            & $t \ge d+2$~\cite{Dyer2006}~(tight) &  $t \ge \frac{3}{2}(d+1)$~\cite{bousquet2022polynomial}         &  $t \ge 2d+2$~\cite{bousquet2016fast}      \\ \hline  
    \makecell[l]{$d$-degenerate\\ and chordal}           & $t \ge d+2$~\cite{Dyer2006}~(tight) &  $t \ge d+2$~\cite{bonamy2014reconfiguration}~(tight)         &  $t \ge 2d+1$~(Theorem~\ref{thm: 2k+1 linear})      \\ \hline  
    \makecell[l]{treewidth $\le k$\\($k \ge 2$)}     & $t \ge k+2$~\cite{Dyer2006}~(tight) &  $t \ge k+2$~\cite{bonamy2013recoloring}~(tight)  & $t \ge 2k+1$~(Theorem~\ref{thm: 2k+1 linear})        \\ \hline 
    treewidth $\le 2$     & $t \ge 4$~\cite{Dyer2006}~(tight) &  $t \ge 4$~\cite{bonamy2013recoloring}~(tight) & $t \ge 5$~\cite{bartier2021recoloring}~(tight:~\cite{bonamy2014reconfiguration})       \\ \hline 
    \end{tabular}
    \caption{
        Research on recoloring diameter for degenerate graphs, chordal graphs, and bounded treewidth graphs. `Tight' means the bound on $t$ cannot be improved.
        }\label{tab: recoloring diameter}
\end{table}

Table~\ref{tab: recoloring diameter} shows the current best bounds of $t$ about the $t$-recoloring diameter.

The list coloring version and correspondence coloring version of recoloring diameter are also studied, the readers may refer to~\cite{CAMBIE2024103798, 2025arXiv250508020C}.
There are also studies focusing on specific graphs, for example, Cambie, Cames van Batenburg and Cranston~\cite{2024arXiv241219695C} studied the exact value of the recoloring diameter of $K_{p,q}$.

This paper is organized as follows.
In Section~\ref{sec: preliminary}, we introduce basic definitions and notations.
In Section~\ref{sec: best choice algorithm}, we introduce the best choice algorithm which is widely used in $t$-recoloring diameter research.
In Section~\ref{sec: main}, we prove Theorem~\ref{thm: 2k+1 linear} and Corollary~\ref{cor: 2k+1 linear}.
In Section~\ref{sec: discussion}, we conclude the paper and introduce some future questions.

\section{Preliminary}\label{sec: preliminary}

We first give definitions involving treewidth.

\begin{definition}
	A \textbf{tree-decomposition} of a graph $G = (V, E)$ is a pair $(X, T)$,
	where $T(I, F)$ is a tree with vertex set $I$ and edge set $F$, and $X = \{X_i \mid i \in I\}$ is a family of subsets of $V$, one for each node of $T$, such that:
	\begin{enumerate}[label=(\arabic*)]
		\item $\bigcup \limits_{i \in I}X_i = V$;
		\item for each edge $uv \in E$, there exists an $i \in I$ such that $u,v \in X_i$;
		\item for each vertex $v \in V$, the induced subgraph of $T$ containing all nodes $i$ such that $v \in X_i$ is a subtree. That is, $T(v) \coloneqq T[\{i \in I \mid v \in X_i\}]$ is a subtree.
	\end{enumerate}
\end{definition}

For any node $i$ of $T$, there is a vertex set $X_i$, called \textbf{bag}, that corresponds to $i$.
In the following, we do not distinguish $i$ and $X_i$ for convenience.
The width of a tree-decomposition $(X, T)$ is $\max \limits_{i\in I} |X_i| - 1$.
The treewidth of a graph $G$ is the minimum treewidth over all possible tree-decompositions of $G$.


A graph is \textbf{$d$-degenerate} if every subgraph has a vertex of degree at most $d$.
Or equivalently, there exists an ordering $v_1, \ldots, v_n$ of the vertices of $G$ such that for any $i \ge 1$, $v_i$ has at most $d$ neighbors in $\{v_1, \ldots, v_{i-1}\}$, that is, $|N_G(v_i) \cap \{v_1, \ldots, v_{i-1}\}| \le d , 1 \le i \le n$ where $N_G(v) = \{u \mid uv \in E(G)\}$ and we ignore the subscript when there is no ambiguity.
Given an ordering $v_1, \ldots, v_n$ of the vertices of $G$, we write $V_i = \{v_1, \ldots, v_i\}$, $G_i = G[V_i]$ which is the subgraph of $G$ induced by $V_i$, and $N^-(v_i) = N(v_i) \cap V_{i-1}$.
In this paper, we always discuss graphs with an ordering of vertices.
When we say $N^-(v)$, we mean $N^-(v_i)$ where $v = v_i$ in the ordering.

It is known that a chordal graph $G$ admits a \textbf{perfect elimination ordering} which is an ordering $v_1, \ldots, v_n$ of the vertices of $G$ such that for any $v_i, 1 \le i \le n$, $N^-(v_i)$ is a clique.
Moreover, if $G$ is a $d$-degenerate and chordal graph, then the clique number of $G$ is at most $d+1$ by the definition of degenerating.
Then in the perfect elimination ordering of $G$, $N^-(v_i)$ has size at most $d$ for any $1 \le i \le n$.

In the following, we will associate a graph with an ordering of its vertices.
When we discuss a $d$-degenerate and chordal graph $G$, we always associate it with its perfect elimination ordering.




\section{Best choice algorithm}\label{sec: best choice algorithm}

In this section, we introduce the \textbf{best choice algorithm} which is widely used in $t$-recoloring diameter research~\cite{bartier2021recoloring,bousquet2016fast}. Our notation is also from~\cite{bartier2021recoloring}.
We shall make a detailed analysis of the algorithm to obtain our main theorem.


In the recoloring process, since in each step we only recolor one vertex, we can characterize the recoloring process by a sequence of recoloring pairs.
Given a graph $G$ and a coloring $\alpha_0$ of $G$, a \textbf{recoloring sequence} $\mathcal{S}$ is a sequence $s_1s_2 \ldots s_m$ where each $s_i$ is a pair $(v_i,c_i)$ describing the vertex $v_i$ and its new color $c_i$ in the $i$-th step.
For a recoloring pair $s=(v,c)$, we say \textbf{$s$ recolors $v$ to $c$} and $c$ is the new color of $v$.
Then we can get a coloring sequence from $\mathcal{S}$ by letting $\alpha_i, 1 \le i \le m$  the coloring obtained from $\alpha_{i-1}$ by changing the color of $v_i$ to $c_i$.
All the intermediate colorings described by this sequence must be proper.
Then we say the $\mathcal{S}$ is a \textbf{recoloring sequence} from $\alpha_0$ to $\alpha_m$.

When we say the coloring of $G$ before step $i$, we mean the coloring $\alpha_{i-1}$.
Sometimes for convenience, we also say $\alpha_{i-1}$ the coloring at the moment right before step $i$.

Let $X$ be a vertex subset of $G$. The restriction of $\mathcal{S}$ to $X$, denoted by $\mathcal{S}_{|X}$, is the recoloring sequence obtained from $\mathcal{S}$ by selecting recoloring of vertices in $X$~(that is, $v_i \in X$).
In particular, if $X=\{v\}$, then $\mathcal{S}_{|v}$ denotes the sequence of colors taken by the vertex $v$ in $\mathcal{S}$.
Let $|\mathcal{S}|$ denotes the length of the sequence $\mathcal{S}$. Then $|\mathcal{S}_{|v}|$ is the number of recoloring of $v$ in $\mathcal{S}$.

A \textbf{pattern} is a sequence of vertices $v_0 v_1 \ldots v_p$.
We say the pattern appears in $\mathcal{S}$ if there is an index $i$ such that for all integers $0 \le j \le p$, $s_{i+j}$ recolors the vertex $v_j$. And we say the pattern appears $N$ times if there are $N$ such indexes. Note that if a pattern appears multiple times, its copies may overlap.

The idea of the best choice algorithm is to extend a recoloring sequence from a subgraph to the whole graph.
Consider a graph $G$ and two colorings $\alpha, \beta$ of $G$ and a vertex $u$ of $G$.
Let $\alpha_{|G-u}$ and $\beta_{|G-u}$ be the restrictions of $\alpha$ and $\beta$ to $G-u$ and $\mathcal{S}$ a recoloring sequence from $\alpha_{|G-u}$ to $\beta_{|G-u}$.
Let us now explain how we can extend this recoloring sequence $\mathcal{S}$ to the whole graph $G$.

Let $t_1, \ldots, t_\ell$ be all the steps where neighbor of $u$ is recolored in $\mathcal{S}$.
That is, $\mathcal{S}_{|N(u)} = s_1's_2'\ldots s_\ell'$ where $s_i' = s_{t_i}$.
Assume $s_i' = s_{t_i} = (v_{t_i}, c_{t_i}), 1 \le i \le \ell$.
Here $c_{t_i}$ is the new color assigned to the recolored neighbor at step $t_i$. If for some $t$, $c_{t}$ is the same as the current color of $u$, then we have to recolor $u$ before step $t$ to avoid conflict.
The \textbf{best choice} for $u$ at step $t \in \{t_1, \ldots, t_\ell\}$ is
\begin{itemize}
    \item the color $\beta(u)$ if $\beta(u)$ is distinct from $c_{t}, \ldots, c_{t_\ell}$;
    \item or any valid choice for $u$ at step $t$ which is distinct from $\{c_{t_i} \mid t_i \ge t\}$ otherwise;
    \item or the valid choice for $u$ at step $t$ that appears the latest in the sequence ${(c_{t_i})}_{t_i \ge t}$ otherwise.
\end{itemize}

The three rules are from~\cite{bartier2021recoloring}.
The first and second rules are explicit.
We explain more about the third rule when the first two rules fail.
When $u$ has to be recolored, the valid colors are $[t] \coloneqq \{1,2,\ldots, t\}$ but the colors of $N[u] \coloneqq N(u) \cup \{u\}$.
We aim to let the number of recoloring of $u$ as small as possible.
So we postpone the time $u$ is recolored next time as late as possible.
Therefore, we choose the largest $j$ such that the set of valid colors but ${(c_{t_k})}_{t \le t_k \le t_j}$ is not empty.
When $j$ attains a maximum, only one valid color is remaining.
Then we choose this valid color to be the best choice of $u$, which is the color that appears the latest in the sequence ${(c_{t_i})}_{t_i \ge t}$.

The \textbf{Local Best Choice} for $u$ extends the sequence $\mathcal{S}$ by recoloring only the vertex $u$.
When a neighbor of $u$ (say $v$) is recolored in $\mathcal{S}$ with the current color of $u$, we add a recoloring for $u$ just before this recoloring and recolor $u$ with its best choice. In this case, we say the recoloring of $u$ is \textbf{caused} by $v$.
We do not perform any other recoloring for $u$ except at the very last step to give it color $\beta(u)$ if needed.

Let $G$ be a graph with ordering $v_1,\ldots,v_n$ of its vertices and $\alpha, \beta$ two colorings of $G$.
The Best Choice Recoloring Algorithm runs the Local Best Choice successively on $v_i, 1 \le i \le n$ from $\alpha_{|G_i}$ to $\beta_{|G_i}$.
The \textbf{best choice recoloring sequence} is the recoloring sequence $\mathcal{S}$ obtained from this procedure.
Observe that by the construction, if $\mathcal{S}$ is a best choice recoloring sequence for $G$ and the ordering $v_1, \ldots, v_n$ from $\alpha$ to $\beta$, then for every $i \ge 1$, $\mathcal{S}_{|V_i}$ is a best choice recoloring sequence of $G_i$ from $\alpha_{|G_i}$ to $\beta_{|G_i}$.

We will apply the algorithm to $d$-degenerate and chordal graphs on its perfect elimination ordering.
The properties of the perfect elimination ordering will ensure the algorithm does not fail when $t \ge d+2$.
Note that when we construct a recoloring sequence from $t$-coloring $\alpha$ to $t$-coloring $\beta$ of $G$, we actually find a path in $R_t(G)$ from $\alpha$ to $\beta$.

\section{Proof of Theorem~\ref{thm: 2k+1 linear} and Corollary~\ref{cor: 2k+1 linear}}\label{sec: main}

We first explain how to derive Corollary~\ref{cor: 2k+1 linear} from Theorem~\ref{thm: 2k+1 linear}.
We need the following proposition.

\begin{proposition}[\cite{bartier2021recoloring}]\label{prop: 2k+1 tw k}
    Let $G$ be a graph of treewidth at most $k$, $t \ge 2k+1$, and $\alpha, \beta$ two $(k+1)$-colorings of $G$. Then using colors $[t]$, $\alpha$ can be transformed into $\beta$ by recoloring every vertex at most twice.
\end{proposition}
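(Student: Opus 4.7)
My plan is to apply the best choice algorithm from Section~\ref{sec: best choice algorithm} on an elimination ordering coming from a width-$k$ tree decomposition of $G$, and to bound the number of recolorings each vertex incurs by induction on the processed subgraph. First I would fix a tree decomposition of width at most $k$ of $G$ and derive a vertex ordering $v_1,\ldots,v_n$ by iteratively removing a vertex that appears in a leaf bag but not in that bag's neighbor in the tree. Such an ordering satisfies $|N^-(v_i)|\le k$ for every $i$, and crucially, $N^-(v_i)\cup\{v_i\}$ is contained in a single bag of the decomposition, so $N^-(v_i)$ cannot simultaneously display more than $k$ distinct colors.

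Next, I would run the best choice algorithm with the ordering $v_1,\ldots,v_n$ to construct a recoloring sequence $\mathcal{S}$ from $\alpha$ to $\beta$, and prove by induction on $i$ that in the restricted sequence $\mathcal{S}_{|V_i}$ every vertex is recolored at most twice. The base case $i=1$ is immediate since $v_1$ needs at most one recoloring from $\alpha(v_1)$ to $\beta(v_1)$. For the inductive step I would extend $\mathcal{S}_{|V_{i-1}}$ by the local best choice of $v_i$ and bound $|\mathcal{S}_{|v_i}|$ by $2$. The key inputs are that only the at most $k$ vertices of $N^-(v_i)$ can trigger a recoloring of $v_i$; each such vertex is recolored at most twice by induction, giving at most $2k$ triggering events; and since $\alpha(N^-(v_i))\cup\beta(N^-(v_i))\subseteq[k+1]$ while each vertex of $V_{i-1}$ takes at most one intermediate color from the ``high'' palette $\{k+2,\ldots,2k+1\}$ by the induction hypothesis, this high palette remains largely available when we need to assign an intermediate color for $v_i$.

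The main obstacle will be a careful case analysis showing that the local best choice indeed caps $v_i$'s recoloring count at $2$. The idea is that after the first time $v_i$ is forced to move, the best choice rule can always select either $\beta(v_i)$ (which then ends the story for $v_i$) or a color in $\{k+2,\ldots,2k+1\}$ that will not be taken by any subsequent recoloring of a neighbor in $\mathcal{S}_{|V_{i-1}}$; such a ``safe'' high color is then only vacated at the very end when $v_i$ is recolored to $\beta(v_i)$. Using the fact that $N^-(v_i)$ lies inside a single bag of size at most $k+1$, one can bound the number of distinct colors simultaneously displayed by $N^-(v_i)$, so the $k$ high colors cannot all be blocked at the critical moment; combined with the ``postpone as long as possible'' property of the best choice rule and the $(k+1)$-coloring assumption on $\alpha,\beta$, this should rule out a third triggering of $v_i$. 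The final (at most second) recoloring then moves $v_i$ to $\beta(v_i)$, closing the induction.
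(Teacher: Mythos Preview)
The paper does not prove this proposition itself; it quotes it from \cite{bartier2021recoloring} and uses it as a black box in the derivation of Corollary~\ref{cor: 2k+1 linear}. So there is no in-paper proof to compare your attempt against. The argument in \cite{bartier2021recoloring} is a short direct two-phase construction exploiting that $\alpha,\beta$ only use colours in $[k+1]$ while the $k$ colours $\{k+2,\dots,2k+1\}$ are free; it does not go through the best choice algorithm.

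Your plan, by contrast, runs the best choice algorithm and tries to show inductively that each vertex is recolored at most twice. The inductive step is where the argument breaks down. Assume each $u\in N^-(v_i)$ is recolored at most twice, with trajectory $\alpha(u)\to c_u\to\beta(u)$. When $v_i$ is first forced to move you need a colour that is simultaneously valid (not a current colour of a neighbour) and safe (not a future colour of a neighbour). The current colours block at most $k$ colours; the future colours can block up to $2k$ more; and nothing you have assumed prevents their union from being all of $[2k+1]$. Concretely, with $|N^-(v_i)|=k$ it is perfectly consistent with your induction hypothesis that the $k$ intermediate colours $c_{u_1},\dots,c_{u_k}$ are exactly the $k$ high colours $\{k+2,\dots,2k+1\}$, while $\{\alpha(u_j),\beta(u_j)\}_j$ together with $\alpha(v_i)$ cover $[k+1]$. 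In that situation neither Rule~1 nor Rule~2 of the best choice applies, Rule~3 kicks in, and $v_i$ may well be forced a second time before the final recoloring to $\beta(v_i)$, giving three recolorings. Your sentence ``the $k$ high colours cannot all be blocked at the critical moment'' is precisely the missing lemma, and the ingredients you list (that $N^-(v_i)$ lies in one bag, that $\alpha,\beta$ use $[k+1]$, that each neighbour has at most one high intermediate colour) do not by themselves prevent this: $k$ neighbours, $k$ high colours, one each.

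If you want to rescue the inductive route you would need a stronger hypothesis that controls \emph{which} colours the algorithm chooses as intermediates, not merely how many recolorings occur; otherwise the step ``this should rule out a third triggering of $v_i$'' is unjustified. The direct two-phase argument avoids all of this.
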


Note that in Proposition~\ref{prop: 2k+1 tw k}, $\alpha$ and $\beta$ only use $(k+1)$ colors while in the transformation process, we can use $t$ colors.
Then we need the following trick used in~\cite{Dyer2006, bartier2021recoloring}.

Given a graph $G$ of treewidth at most $k$ and a $t$-coloring $\alpha$, let $(X, T)$ be a tree-decomposition of $G$ of width at most $k$. 
We are going to build a $k$-degenerate and chordal graph $G''$ from $G$.
For every bag $B$ of $T$, we merge the vertices of $B$ with the same color in $\alpha$.
That is, if two vertices $u,v \in B$ have the same color in $\alpha$, we identify $u$ and $v$ into a new vertex $w$ where $N_{G'}(w) = N_G(u) \cup N_G(v)$ in the new graph $G'$.
And let $\alpha'$ be the coloring of $G'$ obtained from $\alpha$ by preserving the colors of the merged vertex and other vertices.
We repeat it until we obtain a graph $G'$ with coloring $\alpha'$ such that no bag contains two vertices with the same color.
Note that $tw(G') \le tw(G)$ since we only identify the vertices belonging to the same bags.
By the definition, it is clear that $\alpha'$ is a proper coloring of $G'$.
Let $G''$ be the graph obtained from $G'$ by adding edges such that every bag forms a clique.
Since vertices in a bag receive distinct colors in $\alpha'$, adding these edges will not create any conflict in $\alpha'$.
We can verify that $G''$ is a $k$-degenerate and chordal graph with a proper coloring $\alpha'$.
Note that every vertex $v$ in $G''$ corresponds to an independent set in $G$ denoted by $\mathcal{I}(v)$.
Given a coloring $\gamma'$ of $G''$, we can obtain a coloring $\gamma$ of $G$ where for each $v$, every vertex in $\mathcal{I}(v)$ has the color $\gamma'(v)$. 
Since each $\mathcal{I}(v)$ is an independent set in $G$, $\gamma$ is a proper coloring of $G$.
Moreover, if we can transform $\alpha'$ into a coloring $\gamma'$ of $G''$ by recoloring every vertex at most $c$ times, then we can transform $\alpha$ into coloring $\gamma$ of $G$ by recoloring every vertex at most $c$ times where $\alpha$ and $\gamma$ are the corresponding coloring of $G$ from $\alpha'$ and $\gamma'$, respectively.

\noindent
\textbf{Proof of Corollary~\ref{cor: 2k+1 linear} using Theorem~\ref{thm: 2k+1 linear}}

Let $G$ be a graph of treewidth at most $k$ and $\alpha, \beta$ two $t$-colorings of $G$ with $t \ge 2k+1$.
Let $G''$ be the $k$-degenerate and chordal graph obtained from $G$ as described before.
Let $\alpha'$ and $\beta'$ be the coloring of $G''$ obtained from $\alpha$ and $\beta$, respectively.
By Theorem~\ref{thm: 2k+1 linear}, we can transform $\alpha'$~(resp. $\beta'$) into a $(k+1)$-coloring $\gamma_1'$~(resp. $\gamma_2'$) of $G''$ by recoloring each vertex at most $c$ times.
Assume $\gamma_1'$ and $\gamma_2'$ correspond to $(k+1)$-coloring $\gamma_1$ and $\gamma_2$ of $G$, respectively.
Therefore, in $G$, we can transform $\alpha$~(resp. $\beta$) into $\gamma_1$~(resp. $\gamma_2$) by recoloring each vertex at most $c$ times.

From Proposition~\ref{prop: 2k+1 tw k}, we have that $\gamma_1$ can be transformed into $\gamma_2$ by recoloring each vertex at most twice.
As a conclusion, we can transform $\alpha$ into $\beta$ by recoloring each vertex at most $2c+2$ times.
\hfill $\square$\par

The rest of this section is devoted to proving Theorem~\ref{thm: 2k+1 linear}.
In the following, let $G$ be a $d$-degenerate and chordal graph, and $v_1, \ldots, v_n$ be a perfect elimination ordering of $G$.
And let $\alpha$ and $\beta$ be two $t$-colorings of $G$ with $t \ge 2d+1$.
Since the conclusion holds when $t \ge 2d+2$ from the result of $d$-degenerate graphs, we just need to prove when $t = 2d+1$.
And, since the situation when $d=2$ is proved~\cite{bartier2021recoloring}, we assume $d \ge 3$.
We use $[t]= \{ 1, \ldots, t\}$ to denote the set of colors.
Assume $\mathcal{S} = s_1\ldots s_m$ is a best choice recoloring sequence for this ordering between two $\alpha$ and $\beta$.

Let $v$ be any vertex of $G$, it is sufficient to prove $v$ is recolored at most $c$ times in $\mathcal{S}$.
Then constant $c=c(d)$ will be determined later.
Assume $N^-(v) = {\{u_i\}}_{1 \le i \le p}$ where $p \le d$.
Except for the last recoloring of $v$ in which we recolor $v$ to $\beta(v)$, every recoloring of $v$ must be caused by some $u_i$.
According to the best choice rules for $v$, we have the following observations.

\begin{observation}\label{obs: tight recoloring}
    Let $v$ be any vertex of $G$ and $\mathcal{S}$ be the best choice recoloring sequence of $G$.
    Let $w_i \in N^-(v), 1 \le i \le \ell$~(repetition is allowed), where $\ell \le d-1$.
    Then the pattern $vv$ never occurs in $\mathcal{S}_{|N^-[v]}$ and the pattern $vw_1\ldots w_{\ell}v$ occurs at most once, when the second recoloring of $v$ is the last recoloring of $v$ in $\mathcal{S}_{|N^-[v]}$.
\end{observation}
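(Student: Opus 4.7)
The plan is to exploit the exact rules by which the Local Best Choice procedure inserts $v$'s recolorings when $v$ is processed. At that step every recoloring of $v$ is either (i) a \emph{triggered} recoloring, inserted immediately before a recoloring of some $u \in N^-(v)$ whose new color equals $v$'s current color, or (ii) a single \emph{final} recoloring appended at the very end of the current sequence to set $v$ to $\beta(v)$. The subsequent Local Best Choice steps for $v_j$ with $j > i$ insert only recolorings of $v_j$, and $v_j \notin N^-[v]$, so the restriction $\mathcal{S}_{|N^-[v]}$ preserves this placement: every type-(i) $v$-recoloring is immediately followed in $\mathcal{S}_{|N^-[v]}$ by its triggering $N^-(v)$-recoloring, and a type-(ii) $v$-recoloring has no $N^-[v]$-entry after it.

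Given this setup, the first claim is immediate: two consecutive $v$'s in $\mathcal{S}_{|N^-[v]}$ would force either an $N^-(v)$-recoloring between them (if the first is of type (i)) or a $v$-recoloring after a terminal type-(ii) insertion, both impossible.

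For the second claim I will suppose a pattern $v^{(a)} w_1 \cdots w_\ell v^{(b)}$ with $\ell \le d-1$ appears in $\mathcal{S}_{|N^-[v]}$ and, for contradiction, that $v^{(b)}$ is \emph{not} the last $v$-recoloring. Then both $v^{(a)}$ and $v^{(b)}$ are of type (i); write $w_{\ell+1}$ for the $N^-(v)$-recoloring that triggers $v^{(b)}$, let $e_j$ be the new color assigned in the $j$-th upcoming $N^-(v)$-recoloring after $v^{(a)}$, and let $c$ be the color chosen for $v$ at $v^{(a)}$. Then $e_1$ equals $v$'s pre-$v^{(a)}$ color, $e_{\ell+1} = c$, and $e_j \ne c$ for $2 \le j \le \ell$ (otherwise an extra triggered recoloring of $v$ would have been inserted). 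Going through the three best choice rules, rule (1) and rule (2) would both leave $c$ outside the upcoming list $(e_j)$, contradicting $e_{\ell+1} = c$; therefore only rule (3) can have been applied, which forces every color valid for $v$ at $v^{(a)}$ to appear in $(e_j)$ with first occurrence at a position no later than $\ell+1$.

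The finish is a counting contradiction. Since $t \ge 2d+1$ and $v$ has at most $1 + |N^-(v)| \le d+1$ forbidden colors at $v^{(a)}$ (namely $e_1$ together with the current colors of $N^-(v)$), at least $d$ colors are valid. Because $e_1$ is itself forbidden, all $\ge d$ valid colors must then appear among $e_2, \ldots, e_{\ell+1}$, which is at most $\ell \le d-1$ positions --- impossible. Hence $v^{(b)}$ is forced to be the last $v$-recoloring in $\mathcal{S}_{|N^-[v]}$, and uniqueness of the pattern follows because both $v$'s must coincide with the last $v$-recoloring and its immediate $v$-predecessor in $\mathcal{S}_{|N^-[v]}$. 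The main obstacle I anticipate is the bookkeeping inside the best choice analysis: correctly identifying the upcoming list of $N^-(v)$-recoloring colors from the perspective of the instant $v^{(a)}$ is inserted, and carefully using that $e_1$ itself is never a valid candidate for $c$, since it is precisely this exclusion that shrinks the available positions from $\ell+1$ to $\ell$ and produces the clean contradiction with the $\ge d$ valid colors.
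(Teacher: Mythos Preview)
Your proposal is correct and follows essentially the same approach as the paper: both argue that if the second $v$ in the pattern is not the final recoloring, then the first $v$ must have been colored by rule (3), and a count of the $\ge d$ valid colors against the $\le d-1$ available positions among $w_2,\ldots,w_\ell,w'$ (using that $w_1$'s new color is $v$'s old, forbidden, color) yields the contradiction. Your type-(i)/type-(ii) setup and the explicit pigeonhole on first-occurrence positions make the bookkeeping cleaner than the paper's somewhat terse ``there remain two unused valid colors $c_1,c_2$'' phrasing, but the substance is identical.
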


\begin{proof}
    If the recoloring of $v$ is not the last one, then it must be caused by a recoloring of a neighbor of $v$. Hence, $vv$ never appears.
    If pattern $vw_1\ldots w_{\ell}v$ appears and the second recoloring of $v$ is not the last recoloring of $v$, then the second recoloring of $v$ is caused by some $w' \in N^-(v)$.
    Now consider the first recoloring of $v$ in this pattern.
    Obviously, it does not satisfy the first two rules of the best choice.
    So the recoloring rule applied is the third one. However, when $v$ is recolored first in this pattern, there are at least $d$ valid colors~($[2d+1]$ minus the colors of $N^-[v]$).
    The recoloring of $w_1$ uses the color of $v$ since the recoloring of $v$ is caused by $w_1$.
    It means the recoloring of $w_1$ does not use a valid color.
    The rest recolorings of $w_2, \ldots, w_{\ell}$ use up to $d-2$ valid colors.
    There exists at least two valid colors $c_1, c_2$ not used in $N^-[v]$ and the recolorings of $w_1, \ldots, w_{\ell}$.
    Let $c_1$ be the new color of $v$ after the first recoloring.
    Now consider the recoloring of $w'$, which causes the second recoloring of $v$. It means the recoloring of $w'$ use the color $c_1$.
    
    By the third rule for the first recoloring of $v$, it should never choose $c_1$ as there is still another valid color $c_2$ remaining.
    This is a contradiction.
\end{proof}

\vspace*{.2cm}
If a recoloring of $v$ in $\mathcal{S}_{|N^-[v]}$ satisfies that there are exactly $d$ recoloring of vertices from $N^-(v)$ between the recoloring of $v$ and the next recoloring of $v$, then we say the recoloring of $v$ is \textbf{tight}.
Note that the definition of tight recoloring only depends on the sub-sequence $\mathcal{S}_{|N^-[v]}$. 
We have the following observation about tight recoloring.

\begin{observation}\label{obs: tight recoloring property 2}
    Let $v$ be any vertex of $G$ and $\mathcal{S}$ be the best choice recoloring sequence of $G$.
    Suppose $N^-(v) = \{u_1, \ldots, u_d\}$, and there is a tight recoloring of $v$, say in the pattern $vw_1\ldots w_d v$, where $w_i \in N^-(v)$~(repetition is allowed).
Let us denote by $c_1,\ldots,c_d$ the colors of $u_1\ldots,u_d$ before the first recoloring of $v$ and by $c_0$ the color of $v$ before this recoloring.
Let $c_i'$ be the new color of $w_i$ for $1 \le i \le d$. Then we have $[2d+1] = \{c_0, c_1, \ldots, c_{d}, c_1', \ldots, c_d'\}$ unless the last recoloring of $v$ is the last step in $\mathcal{S}_{|N^-[v]}$.
\end{observation}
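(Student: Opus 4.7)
The plan is to identify which rule of the best-choice procedure selected $v$'s new colour $c_v'$ at the first recoloring in the pattern, and then to apply a pigeonhole count on the future colour sequence. First I would record two causation identities. Since the first $v$ in the pattern is not $v$'s last recoloring (it is followed by another $v$ in $\mathcal{S}_{|N^-[v]}$), Local Best Choice must have inserted it just before the next $N^-(v)$-event in the sequence, namely $w_1$; hence $c_1'=c_0$. Under the non-exception hypothesis, the second $v$-step is likewise not the last step in $\mathcal{S}_{|N^-[v]}$, so it was inserted just before some subsequent neighbor event, and that event's new colour must equal $c_v'$.

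Next I would examine which best-choice rule produced $c_v'$ at the first $v$-step. Because $c_v'$ itself lies in the future colour sequence $(c_{t_i})_{t_i\ge t}$---at the step causing the second $v$-recoloring---it cannot have been selected by rule~1 (which would demand $c_v'=\beta(v)$ be absent from every future $c_{t_i}$) nor by rule~2 (which returns only colours absent from the future sequence). Hence rule~3 applied, with two consequences: every valid colour for $v$, i.e., every element of $[2d+1]\setminus\{c_1,\ldots,c_d\}$, occurs in the future sequence, and $c_v'$ is the \emph{latest} valid colour in that sequence.

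To finish I would count. The $d+1$ valid colours split as $\{c_0\}\cup([2d+1]\setminus\{c_0,c_1,\ldots,c_d\})$. The colour $c_0$ is realized by $c_1'=c_0$, and $c_v'$ is realized at the step causing the second $v$-recoloring; by rule-3 latestness, no valid colour lies beyond that step. The remaining $d-1$ non-$c_0$, non-$c_v'$ valid colours must therefore lie in the $d-1$ intermediate slots $c_2',\ldots,c_d'$, one per slot by pigeonhole, giving $\{c_2',\ldots,c_d'\}=[2d+1]\setminus\{c_0,c_1,\ldots,c_d,c_v'\}$. Combining, $\{c_0,c_1,\ldots,c_d\}\cup\{c_v',c_2',\ldots,c_d'\}=[2d+1]$, which (invoking $c_1'=c_0$) delivers the claimed set identity.

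The main delicacy is the rule-3 extremality step---verifying that no valid colour appears strictly later in the future sequence than $c_v'$, and that the slots $c_2',\ldots,c_d'$ are then forced to be pairwise distinct---together with the careful separation of the generic case from the configuration where the last recoloring of $v$ terminates $\mathcal{S}_{|N^-[v]}$.
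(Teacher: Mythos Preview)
Your approach is exactly the paper's: the paper says only that the conclusion ``follows from the third rule of best choice for the first recoloring of $v$'' by analogy with Observation~\ref{obs: tight recoloring}, and you have spelled that argument out in full. (One terminological slip: in the paper a ``valid'' colour for $v$ excludes $v$'s current colour $c_0$ as well, so there are $d$ valid colours, not $d+1$; this is harmless here since you handle $c_0=c_1'$ separately anyway.)

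The one step that does not go through is the final clause. From $\{c_0,c_1,\dots,c_d\}\cup\{c_v',c_2',\dots,c_d'\}=[2d+1]$ together with $c_1'=c_0$ you obtain $\{c_1,\dots,c_d,c_v',c_1',\dots,c_d'\}=[2d+1]$, not the displayed identity $\{c_0,c_1,\dots,c_d,c_1',\dots,c_d'\}=[2d+1]$; indeed, precisely because you have shown $c_1'=c_0$, that displayed set has at most $2d$ distinct elements and so can never equal $[2d+1]$. What you have actually proved --- correctly --- is the version that includes $v$'s \emph{new} colour $c_v'$, and that is the form the paper uses in every subsequent application (for instance in statement~3 of Lemma~\ref{lemma: refine sequence} the new colour $d+2$ of $x$ participates in the count). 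So your mathematics is right; just be aware that the final substitution does not recover the literal displayed identity, which appears to carry a small imprecision in the statement rather than being something your argument establishes.
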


\begin{proof}
    Similar to the proof of Observation~\ref{obs: tight recoloring}, the conclusion follows from the third rule of best choice for the first recoloring of $v$.
\end{proof}

Assume the best choice recoloring sequence restricted to $N^-[v]$ is $\mathcal{S}_{|N^-[v]} = s_1' s_2' \ldots s_{m}'$.
We say the step $i$ is \textbf{saved} for $v$ if at the step $s_i'$, $w \in N^-(v)$ is the vertex being recolored and one of the following holds:

\begin{enumerate}
    \item $v$ is not recolored at step $1, \ldots, i$;
    \item $v$ is not recolored at step $i, \ldots, m$;
    \item $v$ is not recolored at all in the $d$ steps before $s_i'$ in $\mathcal{S}_{|N^-[v]}$.
\end{enumerate}

Lemma~\ref{lemma: save inequality} shows an inequality involving in number of saved recoloring.

\begin{lemma}\label{lemma: save inequality}
    Let $v$ be any vertex of $G$ and $\mathcal{S}$ be the best choice recoloring sequence of $G$.
    If $r$ is the number of saved recoloring for $v$, then the following inequality holds:
    \[
    |\mathcal{S}_{|v} | \le 1 + \left\lceil \frac{\sum_{u \in N^-(v)} |\mathcal{S}_{|u} | - r}{d} \right\rceil.
    \]
\end{lemma}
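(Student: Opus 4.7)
The plan is to bound $k := |\mathcal{S}_{|v}|$ directly in terms of how the $k$ recolorings of $v$ interleave with the $S := \sum_{u \in N^-(v)} |\mathcal{S}_{|u}|$ recolorings of the vertices of $N^-(v)$ inside $\mathcal{S}_{|N^-[v]}$, whose total length is $k+S$. I would let $1 \le p_1 < \cdots < p_k$ be the positions of the $v$-recolorings inside $\mathcal{S}_{|N^-[v]}$ and set $a := p_1 - 1$, $b := (k+S) - p_k$, and $g_j := p_{j+1} - p_j - 1$ for $1 \le j \le k-1$. Every neighbor recoloring then falls either before $p_1$ (there are $a$ of them), after $p_k$ (there are $b$), or inside exactly one gap ($g_j$ of them in gap $j$), so $S = a + b + \sum_{j=1}^{k-1} g_j$.

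The next step is to translate Observation~\ref{obs: tight recoloring} into arithmetic bounds on the $g_j$'s. The clause ``$vv$ never occurs'' gives $g_j \ge 1$ for every $j$, while the clause that $v w_1 \cdots w_\ell v$ with $\ell \le d-1$ can appear at most once, and only when the second $v$ is the last recoloring of $v$, forces $g_j \ge d$ for every $j \le k-2$.

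Then I would count saved positions exactly, region by region. Condition~(1) fires for every neighbor position $i < p_1$, contributing $a$ savings; condition~(2) for every $i > p_k$, contributing $b$; and inside the $j$-th gap, condition~(3) fires precisely at those $i$ with $p_j + d < i < p_{j+1}$, contributing $\max(0, g_j - d)$. The three regions are pairwise disjoint, so
\[
  r \;=\; a + b + \sum_{j=1}^{k-1} \max(0, g_j - d), \qquad S - r \;=\; \sum_{j=1}^{k-1} \min(g_j, d).
\]
Plugging in the gap bounds yields $S - r \ge (k-2)d + 1$, hence $(S-r)/d \ge (k-2) + 1/d$, and since $1/d \in (0,1]$ while $k-2$ is an integer, $\lceil (S-r)/d \rceil \ge k-1$, which rearranges to the required inequality. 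The edge cases $k \in \{0,1\}$ are immediate because the right-hand side is already at least $1$.

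The main obstacle I anticipate is the region-by-region bookkeeping for condition~(3) inside a gap: one must argue that the first $d$ neighbor-positions after $p_j$ are \emph{not} saved, because $v$ was recolored at $p_j$ which sits within their $d$ preceding $\mathcal{S}_{|N^-[v]}$-steps, while all subsequent positions up to $p_{j+1}-1$ \emph{are} saved, and to check that this does not overlap with conditions~(1) or~(2). Once that accounting is clean, the rest is a short computation; the decisive input is that Observation~\ref{obs: tight recoloring} forces all but (at most) the final gap to have length at least $d$, producing the ``$+1$'' in $S-r \ge (k-2)d+1$ that upgrades a floor to the ceiling on the right-hand side.
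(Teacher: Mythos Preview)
Your argument is correct and is a genuinely different route from the paper's. The paper proceeds by induction on $\kappa(\mathcal{S})=\sum_{u\in N^-(v)}|\mathcal{S}_{|u}|$, peeling off the first step of $\mathcal{S}_{|N^-[v]}$ and splitting into cases according to whether that step recolors $v$ or a neighbour; the ceiling is recovered at the end from the inductive estimate. You instead carry out a single global count: partition the neighbour positions into the three regions ``before $p_1$'', ``after $p_k$'', and ``inside gap $j$'', observe that the saved/not-saved status in each region is determined exactly (all of the first two regions are saved; in gap $j$ precisely the last $\max(0,g_j-d)$ positions are saved), and hence obtain the closed form $S-r=\sum_{j=1}^{k-1}\min(g_j,d)$. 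Observation~\ref{obs: tight recoloring} then feeds in the bounds $g_j\ge d$ for $j\le k-2$ and $g_{k-1}\ge 1$, giving $S-r\ge(k-2)d+1$ and the desired ceiling inequality.

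Your approach has the advantage of making the inequality completely transparent: one sees exactly that the only possible slack comes from the final gap, and that the ``$+1$'' there is what produces the ceiling rather than a floor. The paper's inductive proof is shorter to state but hides this structure. One small point worth making explicit in your write-up is that the exact formula for $r$ is needed as an \emph{upper} bound (so that $S-r$ is bounded \emph{below}); for this you must check, as you note in your final paragraph, that a neighbour position $i$ in gap $j$ with $p_j<i\le p_j+d$ fails all three saving conditions, not just condition~(3). This is immediate (condition~(1) fails since $p_1\le p_j<i$; condition~(2) fails since $p_{j+1}>i$), but it is the one place where a reader might pause.
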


\begin{proof}
    We prove the lemma by induction on $\kappa(\mathcal{S}) \coloneqq \sum_{u \in N^-(v)} |\mathcal{S}_{|u} |$. 
    Note that $\kappa(\mathcal{S}) \ge r$ by the definition of saved recoloring.
    If $\kappa(\mathcal{S}) = 0$, note that except the last recoloring of $v$, every recoloring of $v$ must be caused by some neighbor of $v$. Then $v$ is recolored at most once and the conclusion holds.
    In the following, we assume $\kappa(\mathcal{S}) \ge 1$, and the conclusion holds for any vertex $\mathcal{S}'$ with $\kappa(\mathcal{S}') < \kappa(\mathcal{S})$.

    If $v$ is recolored at most once, then the conclusion holds.
    Otherwise, $v$ is recolored at least twice. If $v$ is not the first recolored vertex of $\mathcal{S}_{|N^-[v]}$, then the first recoloring is saved.
    If we consider the sub-sequence $\mathcal{S}'$ starting from the second recoloring of $\mathcal{S}_{|N^-[v]}$.
    Then $\kappa(\mathcal{S}') = \kappa(\mathcal{S}) -1$, and the number of saved recoloring of $v$ in $\mathcal{S}'$ is $r-1$.
    By induction, $v$ is recolored at most $1 + \lceil ((\kappa-1) - (r-1))/d \rceil$ times which is exactly what we need.

    If $v$ is the first vertex recolored in $\mathcal{S}_{|N^-[v]}$. Write $\mathcal{S}_{|N^-[v]} = s_1' \ldots s_{\ell}'$. 
    We know $v$ is recolored at $s_1'$.
    Since $v$ is recolored at least twice, assume the next time $v$ is recolored is $s_p'$.
    By Observation~\ref{obs: tight recoloring}, $p \ge d+2$ unless $p = \ell$. If $p = \ell \le d + 1$, then $ 1 \le \kappa(\mathcal{S}) \le d-1$ and $r=0$. We can verify the inequality holds.

    Otherwise, we have $p \ge d+2$. In this case let the sub-sequence $\mathcal{S}'$ start from $s_{d+2}'$, then by induction, $v$ is recolored in $\mathcal{S}'$ at most $1 + \left\lceil \frac{\kappa(\mathcal{S})-d -r}{d} \right\rceil$ times. Adding the recoloring of $v$ at $s_1'$, we can verify the inequality.
\end{proof}

Note that if every vertex in $N^-(v)$ is recolored at most $c$ times, then $v$ is recolored at most $c+1$ times by Lemma~\ref{lemma: save inequality}. 
Moreover, if $|N^{-}(v)| < d$, then $v$ is recolored at most $c$ times.

\begin{lemma}\label{lemma: cause chain save}
    Let $v$ be any vertex of $G$ and $\mathcal{S}$ be the best choice recoloring sequence of $G$.
    Suppose $N^-(v) = \{u_1, \ldots, u_d\}$ and $\mathcal{S}_{|N^-[v]} = s_1' \ldots s_{m}'$.
    Assume $s_{\ell}'$ is the last recoloring of $v$ in $\mathcal{S}_{|N^-[v]}$.
    Assume at $s_j'$~($1 \le j < \ell$), $u_p$ is recolored and is caused by $u_q$ for some $1 \le p,q \le d$.
    Let $i < j$ be the first step before $j$ in which $v$ is recolored~(assume it exists).
    Then either $i + (d+1) \ge \ell$ or the step $i+(d+1)$ is saved for $v$.
    Moreover, let $\gamma$ denote the number of such $j$'s, then there are at least $\gamma - d$ saved recolorings for $v$.
\end{lemma}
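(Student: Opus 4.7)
The plan is to establish the two assertions of the lemma in sequence, first the single-event ``step $i+(d+1)$ is saved'' claim, then the counting claim.

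For the single-event assertion, I fix the caused-by event $s_j'$ and its preceding $v$-recoloring $s_i'$, and assume $i+(d+1)<\ell$. Let $i^{\mathrm{next}}$ denote the next $v$-recoloring after $s_i'$. By Observation~\ref{obs: tight recoloring}, either $i^{\mathrm{next}}=\ell$ or $i^{\mathrm{next}}\ge i+d+1$; the hypothesis $i+(d+1)<\ell$ forbids $i^{\mathrm{next}}\le i+d$ (since that would force $\ell=i^{\mathrm{next}}\le i+d$). The crux is ruling out the tight subcase $i^{\mathrm{next}}=i+d+1$, in which step $i+(d+1)$ would itself be a $v$-recoloring. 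Here Observation~\ref{obs: tight recoloring property 2} applies (as $i^{\mathrm{next}}<\ell$ is not the last recoloring of $v$) and forces $\{c_0,c_1,\ldots,c_d,c_1',\ldots,c_d'\}=[2d+1]$ with $2d+1$ pairwise distinct entries. The caused-by structure contradicts this: if $j=i+d$, the trigger $u_q$ should be recolored at $j+1=i+d+1$, but that step is $v$'s recoloring, contradiction; if $j=i+k$ with $1\le k\le d-1$, then $u_q=w_{k+1}$ and its new color $c_{k+1}'$ must equal $u_p$'s color just before step $j$, which is either $u_p$'s initial color $c_p\in\{c_1,\ldots,c_d\}$ (if $u_p$ was not recolored earlier in the tight interval) or a previously assigned $c_{k''}'$, producing a forbidden repeat in the set of $2d+1$ distinct colors. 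Hence $i^{\mathrm{next}}\ge i+d+2$, step $i+(d+1)$ is a neighbor recoloring, and the $d$ preceding steps $i+1,\ldots,i+d$ contain no $v$-recoloring, verifying condition~(3).

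For the counting assertion, I partition the $\gamma$ caused-by events according to the interval $(i_k,i_{k+1})$ between consecutive $v$-recolorings that contains them. Within an interval, events at positions $j\ge i_k+d-1$ each yield the trigger step $j+1$ as a distinct saved step (via condition~(3), since $v$ is not recolored in $[j+2-d,j]$ once $j>i_k+d-2$), while the earlier events collapse to the shared candidate $i_k+(d+1)$ provided by Part~1. Summing over intervals, and accounting for the bounded loss from overlap between $i_k+(d+1)$ and some trigger-step saved step (when $j=i_k+d$ is itself a caused-by event), together with events in the final interval where $i+(d+1)\ge\ell$ (bounded by the interval length), yields at least $\gamma-d$ distinct saved recolorings.

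The main obstacle is Part~2's counting. While the per-interval assignment of saved steps is natural, verifying that the global total reaches $\gamma-d$ requires careful positional bookkeeping: one must track when $i_k+(d+1)$ coincides with some trigger-step saved step within its own interval, manage the boundary events whose candidate $i+(d+1)$ exceeds $\ell$, and ensure no unintended overlap between trigger-step saved steps coming from different intervals. Handling these carefully---each contributing a bounded loss summing to at most $d$---is the technical heart of the argument.
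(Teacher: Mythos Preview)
Your argument for the first assertion is essentially the paper's: assume $i+d+1<\ell$, use Observation~\ref{obs: tight recoloring} to reduce to the tight case $i^{\mathrm{next}}=i+d+1$, and then derive a contradiction from the event at $j\in(i,i+d]$ by exhibiting a colour repeat among $c_0,\dots,c_d,c_1',\dots,c_d'$ (you invoke Observation~\ref{obs: tight recoloring property 2}; the paper re-derives the same contradiction directly via the third best-choice rule).

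Your counting for the second assertion has a genuine gap. In each interval $(i_k,i_{k+1})$ you map the ``late'' events ($j\ge i_k+d$) injectively to saved trigger-steps $j+1$ and collapse the ``early'' events ($j\le i_k+d-1$) to the single saved step $i_k+d+1$ furnished by Part~1. But an interval can contain up to $d-1$ early events, so this collapse incurs a deficit of roughly $d-2$ \emph{per interval}, and nothing in your sketch prevents these per-interval deficits from accumulating across many intervals to far exceed~$d$; your closing assertion that the losses are ``bounded\dots summing to at most $d$'' is exactly the point that needs proof. The missing idea---presumably what the paper's terse ``similar argument'' intends---is to rerun Part~1's colour count over the whole interval rather than just its first $d$ steps: every event at $j=i_k+a$ forces $c'_{a+1}$ to coincide with some $c_p$ or some earlier $c'_{a''}$, so together with $c'_1=c_0$ at least $g_k+1$ of the new colours in the interval are not first appearances of valid colours; since rule~3 at step $i_k$ forces each of the $d-1$ non-chosen valid colours to first appear inside the interval, this yields interval length $\ge d+g_k$, hence at least $g_k$ condition-(3) saved steps there, eliminating the per-interval deficit entirely (only the final interval contributes a bounded loss). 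Without this strengthening, the injection bookkeeping you outline cannot reach $\gamma-d$.
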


\begin{proof}
    If $i + (d+1) < \ell$, from Observation~\ref{obs: tight recoloring}, we have $v$ is not recolored in all $s_{i+1}', \ldots, s_{i+d}'$.
    It is sufficient to prove that $s'_{i+d+1}$ does not recolor $v$ and thus it is a saved recoloring for $v$.
    If $v$ is recolored at $s'_{i+d+1}$, then by the way we choose $i$, we have $i < j < i+d$.
    Let $c_1, \ldots, c_d$ be the colors of $u_1, \ldots, u_d$ before the recoloring of $v$ at $s_i'$ and let $c_0$ be the color of $v$ before $s_i'$.
    Let $c_1', \ldots, c_d'$ be the new colors of $s_{i+1}', \ldots, s_{i+d}'$, respectively.
    By the best choice algorithm, we have $c_1' = c_0$.
    Since $s_j'$ is caused by some $u_q$, then at step $j+1$, $u_q$ is recolored with the color $c_{j+1}'$.
    If $u_q$ has not been recolored between step $i+1$ and step $j$, then $c_{j+1}' = c_p$.
    Otherwise, we have $c_{j+1}' \in \{c_1',\ldots,c_j'\}$.
    In both cases, we have
    \[
        \left|\{c_0, c_1, \ldots, c_d, c_1', \ldots, c_d'\}\right| \le 2d-1.
    \]
    Since $i+d+1 < \ell$, then at $s_{i+d+1}'$, the recoloring of $v$ is caused by some $u' \in N^-(v)$. 
    Which means, $u'$ is going to be recolored with the new color of $v$ at step $i$.
    Since there are still at least two valid colors for $v$ at $s_i'$, it violates the third rule of the best choice of $v$ at $s_i'$. The reason here is similar to the proof of Observation~\ref{obs: tight recoloring}.

    Note that if $i+d+1 > \ell$, then $j+d > \ell$, and any recoloring of vertices in $N^-(v)$ after $s_{\ell}'$ is a saved recoloring for $v$.
    The second statement holds by a similar argument.
\end{proof}

\vspace{.2cm}
The following lemma is the key to bound the number of recoloring of a vertex.

\begin{lemma}\label{lemma: refine sequence}
    Let $x$ be any vertex of $G$ and $\mathcal{S}$ be the best choice recoloring sequence of $G$.
    Assume $N^-(x) = \{y_1, \ldots, y_d\}$ and $N^-(y_1) = \{y_2, \ldots, y_d, w\}$. If $x = v_i$ in the perfect elimination ordering, $c \ge \max_{i' < i}|\mathcal{S}_{|v_{i'}}|$, and $x$ is recolored at least $c-p$ times in $\mathcal{S}$ where $p$ is a non-negative integer~($|\mathcal{S}_x| \ge c-p$), then we have the following properties.
    \noindent
    \begin{enumerate}
        \item The patterns $\{y_1w(z_2\ldots z_{d})y_1 \mid z_j \in \{y_2, \ldots, y_d\}, \forall 2 \le j \le d \}$ appear at least $c - 3d^2(p+2)$ times in $\mathcal{S}_{|N^-[y_1]}$ in total.
        \item The patterns
        \[
            y_1w(z_2\ldots z_d)xy_1w(z_2'\ldots z_d') xy_1w(z_2''\ldots z_d'')xy_1w(z_2'''\ldots z_d''')xy_1,
        \]
        where $z_j, z_j', z_j'',z_j''' \in \{y_2, \ldots, y_d\}, \forall 2 \le j \le d$, appear at least $c-16d^2(p+2)$ times in $\mathcal{S}_{|N^-[y_1]\cup \{x\}}$ in total.
        \item Let $s'$ be a recoloring of $x$, and $s'', s'''$ be the next two recolorings of $x$ after $s'$ in $\mathcal{S}_{|N^-[y_1]\cup \{x\}}$.
        Let $c_1$ denote the color of $x$ before $s'$, $c_2$ the color of $x$ after $s'$, and $c_3$ the color of $x$ after $s''$.
        We say the recoloring $s'$ of $x$ is \textbf{rotating} if at $s'''$, $x$ is recolored back to $c_1$.
        That is, the color of $x$ behaves like $c_1 \xrightarrow[]{s'} c_2 \xrightarrow[]{s''} c_3 \xrightarrow[]{s'''} c_1$.
        Then at most $16d^2(p+2)+1$ recolorings of $x$ are not rotating.
        Moreover, for every $xy_1w$ appears in the patterns in statement 2, the colors of $x, y_1, w$ are the same before and after the occurrence of this pattern.
    \end{enumerate}
\end{lemma}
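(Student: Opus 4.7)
The plan is to (i) propagate the lower bound $|\mathcal{S}_{|x}| \ge c-p$ via two applications of Lemma~\ref{lemma: save inequality}, giving lower bounds on $|\mathcal{S}_{|y_j}|$ and $|\mathcal{S}_{|w}|$; (ii) analyze the tight-section structure of $\mathcal{S}_{|N^-[y_1]}$ to establish statement 1; (iii) overlay $x$-recolorings and iterate to derive statement 2; and (iv) trace the color dynamics to obtain statement 3.

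First I would apply Lemma~\ref{lemma: save inequality} to $x$ with $|\mathcal{S}_{|x}| \ge c-p$ and $|\mathcal{S}_{|u}| \le c$ for $u \in N^-(x)$, obtaining $\sum_{u \in N^-(x)} |\mathcal{S}_{|u}| \ge d(c-p-2)$, which yields $|\mathcal{S}_{|y_j}| \ge c - d(p+2)$ for each $j$ and a bound of $d(p+2)$ on the saved recolorings for $x$. A second application to $y_1$, using $N^-(y_1) = \{w, y_2, \ldots, y_d\}$, gives $|\mathcal{S}_{|w}| \ge c - d^2(p+2) - 2d$ and at most $d^2(p+2) + 2d$ saved recolorings for $y_1$.

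For statement 1 I would decompose $\mathcal{S}_{|N^-[y_1]}$ into gaps between consecutive recolorings of $y_1$. Since $\sum_u |\mathcal{S}_{|u}| \le dc$ and $d|\mathcal{S}_{|y_1}| \ge d(c - d(p+2))$, the total excess across non-tight gaps is at most $d^2(p+2)+d$, bounding the number of non-tight gaps. Together with the at-most-one-short-middle-gap bound from Observation~\ref{obs: tight recoloring}, at least $c - (d+d^2)(p+2) - O(d)$ gaps are tight, of the form $y_1 z_1 \cdots z_d y_1$. I would then charge the "bad" tight gaps — those with $z_1 \ne w$ or with $w$ occurring at some $z_j$ for $j \ge 2$ — to exceptional events. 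In a tight gap with $z_1 \ne w$, the color cover $\{c_0, c_1, \ldots, c_d, c_1', \ldots, c_d'\} = [2d+1]$ from Observation~\ref{obs: tight recoloring property 2} together with the third rule of best choice for $y_1$ (forcing the new color to be $c_d'$) produces a cause pair inside $N^-(y_1)$, which Lemma~\ref{lemma: cause chain save} translates into a saved recoloring; extra $w$-recolorings are charged against the slack $|\mathcal{S}_{|w}| - (\text{good gaps})$. Summing both defect types gives at most $2d^2(p+2) + O(d)$ bad tight gaps, and hence at least $c - 3d^2(p+2)$ good gaps.

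For statement 2 I would apply the same tight-gap analysis to $\mathcal{S}_{|N^-[x]}$: since $|\mathcal{S}_{|x}| \ge c-p$, most $x$-gaps are tight of the form $x y_{i_1} \cdots y_{i_d} x$. In a good $y_1$-section $y_1 w z_2 \cdots z_d y_1$ the best-choice third rule at the second $y_1$ forces its new color to equal $x$'s current color, since $x \in N(y_1)$ provides the unique valid candidate appearing latest in the upcoming sequence under the color equation from Observation~\ref{obs: tight recoloring property 2}. Consequently, an $x$-recoloring must be inserted between $z_d$ and the next $y_1$ in $\mathcal{S}_{|N^-[y_1] \cup \{x\}}$. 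Concatenating four consecutive aligned good $y_1$-sections produces one occurrence of the statement 2 pattern, and summing the per-alignment defect budgets yields the bound $c - 16 d^2(p+2)$. For statement 3, inside each statement 2 pattern the cause chain forces $x$'s new color to be $w$'s current color, $y_1$'s subsequent new color to be $x$'s old color, and $w$'s subsequent new color to be $y_1$'s old color — a three-cycle on the colors of $\{x, y_1, w\}$ that returns $x$ to its initial color after three consecutive recolorings, establishing the rotation property. Non-rotating recolorings of $x$ correspond bijectively to deviations from the statement 2 pattern, bounded by $16d^2(p+2)+1$ (the extra $1$ accounting for the final recoloring of $x$).

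The main obstacle will be the fine-grained bookkeeping in statement 1: each defect type — non-tight gap, short gap, $z_1 \ne w$, and extra $w$ at some $z_j$ for $j \ge 2$ — must be charged to a disjoint pool of exceptional events (saved recolorings via Lemma~\ref{lemma: cause chain save} or slack in $|\mathcal{S}_{|w}|$) so that the final constants match $3d^2(p+2)$ and $16d^2(p+2)$. The most delicate part will be correctly applying the third rule of best choice at the exact color configurations produced by Observation~\ref{obs: tight recoloring property 2} while tracking the cause chains inside the clique $N^-(y_1)$ supplied by the perfect elimination ordering.
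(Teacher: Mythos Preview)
Your overall strategy for statement~1 is close to the paper's, though your charge for the event ``$z_1 \ne w$'' is more convoluted than necessary. If the first neighbor recolored after a recoloring of $y_1$ is some $z_1 \in \{y_2,\ldots,y_d\}$ rather than $w$, then that recoloring of $y_1$ is \emph{caused by} $z_1$; since both $y_1$ and $z_1$ lie in $N^-(x)$, this is exactly the hypothesis of Lemma~\ref{lemma: cause chain save} applied with $v=x$, which yields a saved recoloring for $x$ directly. There is no need to invoke Observation~\ref{obs: tight recoloring property 2} or to talk about ``a cause pair inside $N^-(y_1)$'' here.

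The genuine gap is in statement~2. Your argument for why the recoloring of $x$ must sit immediately before the next $y_1$ is that ``the best-choice third rule at the second $y_1$ forces its new color to equal $x$'s current color, since $x \in N(y_1)$.'' This is backwards. In the best choice algorithm, the recoloring sequence for $y_1$ is constructed from $\mathcal{S}_{|N^-[y_1]}$ alone, where $N^-(y_1)=\{y_2,\ldots,y_d,w\}$; the vertex $x$ lies \emph{after} $y_1$ in the perfect elimination ordering, so $x \notin N^-(y_1)$ and the best choice rules for $y_1$ never look at $x$. Causation runs the other way: recolorings of $x$ are caused by recolorings of the $y_j$. Consequently, your argument does not locate $x$ within the block $w z_2\cdots z_d$ at all.

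The paper proves this placement by contradiction: if $x$ were inserted before some $z_{\ell+1}$ with $\ell<d$, then tightness of $x$ propagates the insertion point across three consecutive $y_1$-blocks, and a careful color trace (using Observation~\ref{obs: tight recoloring property 2} on a tight recoloring of $y_1$ together with the fact that $\{w,y_2,\ldots,y_d\}$ is a clique) forces the color~$2$ to be used twice among the $2d{+}1$ entries of a single tight window for $y_1$, contradicting Observation~\ref{obs: tight recoloring property 2}. This contradiction is the technical heart of the lemma and the reason four consecutive good blocks are needed; your proposal does not contain it.
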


\begin{proof}
    \textbf{Proof of statement 1.}
    Write $\mathcal{S}_{|N^-[y_1]} = s_1' \ldots s_{\ell}'$.
    From Observation~\ref{obs: tight recoloring}, between each two consecutive recolorings of $y_1$ in $\mathcal{S}_{|N^-[y_1]}$, there are at least $d$ elements unless the later one is the last recoloring of $y_1$.

    We claim that there are at most $(p+2)d-1$ saved recolorings for $x$. Otherwise, from Lemma~\ref{lemma: save inequality}, we have $c-p \le |\mathcal{S}_{|x}| \le 1 + \left\lceil \frac{\sum_{u \in N^-(x)} |\mathcal{S}_{|u}| - (p+2)d}{d} \right\rceil \le c-p-1$, a contradiction.
    Similarly, we have that $y_1$ is recolored at least $c - ((p+2)d-1)$ times.

    If there are $\gamma$ recolorings of $y_1$ caused by some $z \in \{y_2, \ldots, y_d\}$, by Lemma~\ref{lemma: cause chain save}, it will result in $\gamma -d$ saved recolorings for $x$.
    As a result, there are at most $(p+3)d-1$ recolorings of $y_1$ which is not caused by $w$ and furthermore, the pattern $y_1w$ appears at least $c - ((2p+5)d-2)$ times in $\mathcal{S}_{|N^-[y_1]}$. 
    Moreover, we have that at most $((2p+5)d-2)$ recolorings of $w$ that does not cause a recoloring of $y_1$.

    Now, we consider the sequence $\mathcal{S}_{|N^-[y_1]}$. Note that $y_1$ is recolored at least $c-((p+2)d-1)$ times and hence there are at most $d((p+2)d+1)-1$ saved recolorings for $y_1$. 
    Note that every non-tight recoloring of $y_1$ gives a saved recoloring for $y_1$.
    From Observation~\ref{obs: tight recoloring}, between two consecutive recolorings of $y_1$, there are exactly $d$ elements with at most $d((p+2)d+1)$ exceptions~(last one and non-tight ones).
    For every tight recoloring of $y_1$, it is always caused by $w$ with at most $((2p+5)d-2)$ exceptions.
    For a tight recoloring of $y_1$ caused by $w$, $w$ does not appear in the rest $(d-1)$ recolorings with at most $((2p+5)d-2)$ exceptions.
    As a conclusion, the patterns $y_1w(z_2\ldots z_{d})y_1$ appear at least 
    \[
        c - ( (p+2)d-1) - 1 - d((p+2)d+1 ) - 2((2p+5)d-2) \ge c - 3d^2(p+2)
    \]
    times.

    \textbf{Proof of statement 2.}
    By statement 1, we have that $y_1w(z_2\ldots z_d)y_1$ appears at least $c-3d^2(p+2)$ times in $\mathcal{S}_{|N^-[y_1]}$.
    If a recoloring of $y_1$ is the first recoloring in the pattern $y_1w(z_2\ldots z_d)y_1$ in $\mathcal{S}_{|N^-[y_1]}$, we say the recoloring of $y_1$ is \textbf{good}.
    That is, there are at least $c-3d^2(p+2)$ good recolorings of $y_1$ in $\mathcal{S}_{|N^-[y_1]}$.
    Now we consider the recoloring sequence $\mathcal{S}_{|N^-[y_1]\cup \{x\}}$. We can view it as inserting recoloring of $x$ into $\mathcal{S}_{|N^-[y_1]}$. Since $x$ is recolored at least $c-p$ times, there are at most $d(p+2)-1$ saved recoloring for $x$. Hence, at most $d(p+2)$ recoloring of $x$ is not tight.
    In the pattern $y_{1}w(z_2\ldots z_d)y_1$, there must be a recoloring of $x$ inside with at most $d(p+2)+1$ exceptions.
    Therefore, there are at least $c-3d^2(p+2) - 2(d(p+2)+1) \ge c - 4d^2(p+2)$ good recoloring of $y_1$ and there is a tight recoloring of $x$ inserted before the next recoloring of $y_1$.

    Since we have at least $c-4d^2(p+2)$ and at most $c$ such recolorings of $y_1$, there must be at least $c-16d^2(p+2)$ four consecutive such recolorings of $y_1$.
    That is, the pattern
    \[
    y_1w(z_2\ldots z_d)y_1w(z_2'\ldots z_d')y_1w(z_2''\ldots z_d'')y_1w(z_2'''\ldots z_d''')y_1
    \]
    where $z_j, z_j', z_j'',z_j''' \in \{y_2, \ldots, y_d\}, 2 \le j \le d$, appears at least $c-16d^2(p+2)$ times in $\mathcal{S}_{|N^-[y_1]}$. 
    Moreover, between each two consecutive recolorings of $y_1$ in the pattern, there is a tight recoloring of $x$ inserted.
    Here four consecutive such patterns are necessary for the following proof.
    

    Now we claim the recoloring of $x$ must be inserted just before $y_1$.
    Otherwise, suppose the first recoloring of $x$ is inserted before $z_{\ell+1}$ for some $\ell < d$. Since all inserted recolorings of $x$ are tight, the next recolorings of $x$ are inserted exactly before $z_{\ell+1}', z_{\ell+1}''$ and $z_{\ell+1}'''$.
    As a result, we have a (sub)-sequence in $\mathcal{S}_{|N^-[y_1]\cup \{x\}}$ as follows.
    \[
    y_1w(z_2\ldots z_{\ell}) x^{(1)}(z_{\ell+1}\ldots z_d)y_1^{(1)}w(z_2'\ldots z_{\ell}') x^{(2)}(z_{\ell+1}'\ldots z_d') y_1^{(2)}w(z_2''\ldots z_{\ell}'') x^{(3)}(z_{\ell+1}''\ldots z_d'')y_1^{(3)}.
    \]
    Here for convenience, we use $x^{(i)},y^{(i)}, i=1,2,3$, to distinguish different recolorings of $x$ and $y_1$, respectively.
    Let $t_i, i=1,2,3$, be the moment just before the recoloring of $x^{(i)}$.
    Then $y_1^{(i)}, i=1,2,3$, is the first recoloring of $y_1$ after $t_i$.
    We may assume $y_1^{(3)}$ is not the last recoloring of $y_1$ in $\mathcal{S}_{|N^-[y_1]\cup \{x\}}$ since it only happens at most once.
    Let $c_{v}^{t_i}$ be the color of vertex $v$ at moment $t_i$.
    Without loss of generality, we assume $c_{x}^{t_1} =1$ and $c_{y_i}^{t_1} = i+1, 1 \le i \le d$.
    See Figure~\ref{fig: sequence} for an example.


    The recoloring of $y_1^{(1)}$ is caused by $w$, hence,  $c_{w}^{t_2} = 2$.
    If $c_{x}^{t_3} \neq 2$, then between $t_2$ and $t_3$, one in $\{z_{l+1}', \ldots, z_d', y_1^{(2)}, z_2'', \ldots, z_{\ell}''\}$ is recolored by $2$ due to the way we choose best choice for $x$ at $t_2$.
    And since $c_{w}^{t_2} = 2$ and $\{w,y_2,\ldots,y_d\}$ is a clique, only $\{z_2'', \ldots, z_{\ell}''\}$ can be recolored by $2$ between $t_2$ and $t_3$.
    Note that before the recoloring of $y_1^{(2)}$, $w$ has color $2$, and at least one of $\{z_2'', \ldots, z_{\ell}''\}$ is recolored by $2$. 
    The color $2$ is used twice both in the colors of $N^-(y_1)$ before the recoloring of $y_1^{(2)}$ and in the new colors of the $d$ recolorings after the recoloring of $y_1^{(2)}$.
    Since the recoloring of $y_1^{(3)}$ is not the last recoloring of $y_1$, this contradicts that the recoloring of $y_1^{(2)}$ is tight by Observation~\ref{obs: tight recoloring property 2}.
    From above all, we must have $c_{x}^{t_3} = 2$.

    At moment $t_3$, the recoloring of $x$ is caused by $z_{l+1}''$.
    That is, after $t_3$, $z_{l+1}''$ is going to be recolored by $2$.
    Again by a similar argument as above, it contradicts that the recoloring of $y_1^{(2)}$ is tight.

\begin{figure}[t]
 \begin{center}
   \includegraphics[width=0.9\textwidth]{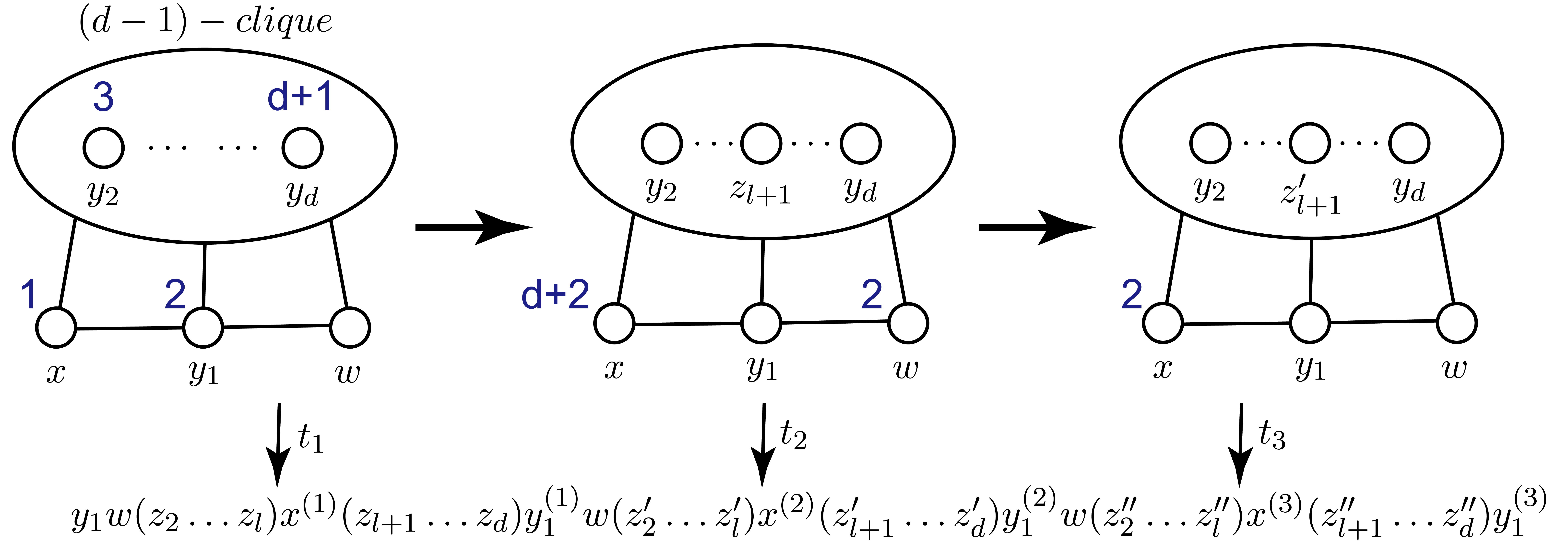}
\end{center}
    \caption{A sample of the pattern in the proof of statement 2. The blue numbers represent colors.}\label{fig: sequence}
\end{figure}

    Then, the recoloring of $x$ must be inserted exactly before $y_1$. Hence, the pattern

    \[
y_1w(z_2\ldots z_d)xy_1w(z_2'\ldots z_d') xy_1w(z_2''\ldots z_d'')xy_1w(z_2'''\ldots z_d''')xy_1,
    \]
    appears at least $c-16d^2(p+2)$ times in $\mathcal{S}_{|N^-[y_1]\cup \{x\}}$.

\begin{figure}[t]
\begin{center}
    \includegraphics[width=0.6\textwidth]{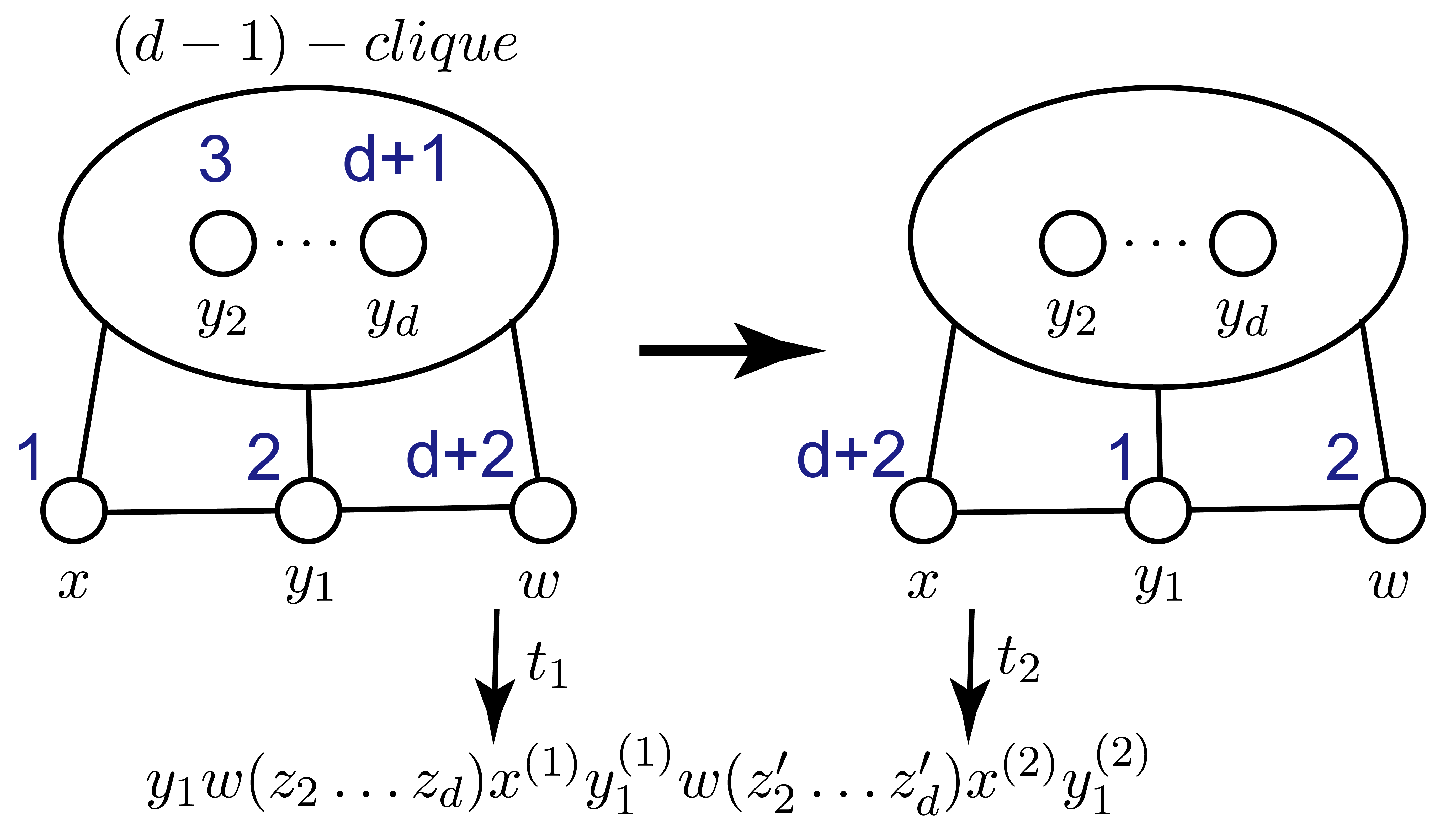}
\end{center}
    \caption{A sample of the pattern in the proof of statement 3. The blue numbers represent colors.}\label{fig: sequence 2}
\end{figure}

   
    \textbf{Proof of statement 3.}
    Now we focus on the (sub)-sequence
    \[
        y_1w(z_2\ldots z_d)x^{(1)}y_1^{(1)}w(z_2'\ldots z_d') x^{(2)}y_1^{(2)}.
    \]

    Similarly, let $t_i, i=1,2$ be the moment just before the recoloring of $x^{(i)}$.
    Without loss of generality, assume at moment $t_1$, $c^{t_1}_{x} = 1$ and $c^{t_1}_{y_i} = i+1, 1 \le i \le d$ and $x$ is going to be recolored by $d+2$.
    By Observation~\ref{obs: tight recoloring property 2} on tight recoloring of $x^{(1)}$, $\{z_2', \ldots, z_d'\}$ must use up all colors in $\{d+3, \ldots, 2d+1\}$.
    Since the recoloring of $x^{(1)}$ is caused by $y_1^{(1)}$, $y_1^{(1)}$ is going to be recolored by $1$.
    By Observation~\ref{obs: tight recoloring property 2} on tight recoloring of $y_1^{(1)}$, together with $c_{w}^{t_1}$, $\{z_2', \ldots, z_d'\}$ use up all colors in $\{d+2, d+3, \ldots, 2d+1\}$.
    Then, $c^{t_1}_{w}$ must be $d+2$.
    See Figure~\ref{fig: sequence 2} for an example.

    Hence the colors of $x, y_1$ and $w$ are looping.
    Here by looping, we mean, $x$ gets the color of $w$, then $y_1$ gets the color of $x$, and finally $w$ gets the color of $y_1$.
    The colors tuple of $x,y_1,w$ behaves like cyclic shifting one bit to the right.
    Similarly, it holds for the recolorings of $x^{(2)}$ and $x^{(3)}$.
    Then the recoloring of $x^{(1)}$ is rotating and hence there is at least $c-16d^2(p+2)$ rotating recoloring of $x$.
    Since $x$ is recolored at most $c+1$ times from Lemma~\ref{lemma: save inequality}, there are at most $16d^2(p+2)+1$ non-rotating recoloring of $x$.
\end{proof}

\vspace*{.2cm}
For a $(d-1)$-clique $X = \{x_1, \ldots, x_{d-1}\}$.
Assume $\mathcal{S}_{|X} = \{s_1', \ldots, s_m'\}$.
we say the recoloring $s_i'$ in $\mathcal{S}_{|X}$ is \textbf{naughty} if it satisfies the following properties.
    \begin{enumerate}
        \item There are three fixed colors that are neither in the colors of $X$ before $s_i'$, nor the new colors of $\{s_{i+1}', s_{i+2}', \ldots, s_{i+3d+4}'\}$.
        \item For every $j$ from $i+1$ to $i+3d-4$, $s_j'$ is not caused by $s_{j+1}'$.
    \end{enumerate}

Note that whether a recoloring of a vertex in $X$ is naughty only depends on the recoloring sequence $\mathcal{S}_{|X}$.




\noindent
\textbf{Proof of Theorem~\ref{thm: 2k+1 linear}}


Fix $c = 2^{18}d^7$.
Let $\mathcal{S}$ be the best choice recoloring sequence of $G$.
We are going to prove the following claim.

\begin{claim}\label{claim: naughty}
    For each vertex $v$, $v$ is recolored at most $c$ times in $\mathcal{S}$.
    And for each $(d-1)$-clique $X$, there are at most $(d-1)c-160d^3-1$ naughty recolorings.
\end{claim}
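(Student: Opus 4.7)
The plan is to prove Claim~\ref{claim: naughty} by simultaneous strong induction on the vertex index $i$ in the perfect elimination ordering. At stage $i$ the inductive hypothesis asserts (A) $|\mathcal{S}_{|v_j}|\le c$ for every $j\le i-1$, and (B) every $(d-1)$-clique $X\subseteq V_{i-1}$ has at most $(d-1)c-160d^3-1$ naughty recolorings in $\mathcal{S}_{|X}$. In the inductive step at $x:=v_i$ I first establish the recoloring bound for $x$, then verify the naughty bound for every $(d-1)$-clique $X\subseteq V_i$ that newly contains $x$.

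For the recoloring bound, the case $|N^-(x)|\le d-1$ is immediate from Lemma~\ref{lemma: save inequality} together with the inductive bound $|\mathcal{S}_{|y_j}|\le c$, since $c=2^{18}d^7\gg d$. If $N^-(x)=\{y_1,\ldots,y_d\}$ is a $d$-clique, let $y_1$ be the PEO-maximal member, so $\{y_2,\ldots,y_d\}\subseteq N^-(y_1)$. The sub-case $|N^-(y_1)|\le d-1$ is dispatched by applying Lemma~\ref{lemma: save inequality} first to $y_1$, saving $\Theta(c/d)$ in $|\mathcal{S}_{|y_1}|$, and then again to $x$, yielding $|\mathcal{S}_{|x}|<c$. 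The core sub-case is $|N^-(y_1)|=d$; let $w$ be the distinguished extra neighbour of $y_1$. Suppose for contradiction $|\mathcal{S}_{|x}|\ge c+1$ and invoke Lemma~\ref{lemma: refine sequence} with $p=0$. Statement~2 of the lemma produces at least $c-32d^2$ occurrences of the four-block pattern, and Statement~3 forces a rigid three-color rotation of $(x,y_1,w)$ through some fixed three-element color set $C^*$. Observation~\ref{obs: tight recoloring property 2} then pins the colors of the $(d-1)$-clique $X:=\{y_2,\ldots,y_d\}\subseteq V_{i-1}$ into $[t]\setminus C^*$, alternating between two disjoint $(d-1)$-element halves across consecutive rotations. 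I will argue that each of the $d-1$ $X$-recolorings inside a rotation is naughty: the three colors of $C^*$ lie outside $X$'s current colors and, because $3d+4$ consecutive $X$-recolorings span only $O(1)$ rotations, also outside the new colors of the next $3d+4$ $X$-recolorings (Condition~1); and the new color of any $X$-recoloring lies in one half while the previous vertex's old color lies in the other, preventing any cause-pair among the next $3d-4$ $X$-recolorings (Condition~2). Summing yields at least $(c-32d^2)(d-1)>(d-1)c-160d^3-1$ naughty recolorings in $\mathcal{S}_{|X}$, contradicting the inductive hypothesis (B) on $X$.

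For the naughty bound at $(d-1)$-cliques $X\subseteq V_i$ with $x\in X$: by the recoloring bound just proved, $|\mathcal{S}_{|X}|\le (d-1)c$. An analogous analysis using Lemma~\ref{lemma: refine sequence} applied at $x$ or at the PEO-maximal vertex of $X$, combined with boundary accounting (the final $3d+4$ recolorings of $\mathcal{S}_{|X}$ trivially fail Condition~1 of naughtiness, and recolorings embedded in tight rotations fail Condition~2), identifies the required $\ge 160d^3+1$ non-naughty recolorings. The main obstacle is the color- and cause-pair tracking across the full $(3d+4)$-step window in the core sub-case of the recoloring bound: one must verify that the three-color rotation and the $A/B$ alternation persist over several consecutive rotations, and that repetitions among the $z_j$'s do not introduce spurious cause-pairs. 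The constant $c=2^{18}d^7$ is calibrated precisely to absorb all $O(d^3)$ slack from non-rotating recolorings and boundary effects.
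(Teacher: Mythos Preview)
Your plan for the \emph{recoloring bound} (showing $|\mathcal{S}_{|x}|\le c$) is essentially the paper's argument: assume $|\mathcal{S}_{|x}|\ge c+1$, apply Lemma~\ref{lemma: refine sequence} with $p=0$ to produce $\ge c-32d^2$ four-block patterns in which $x,y_1,w$ loop through a fixed triple $C^*$, and deduce that the $(d-1)$-clique $X=\{y_2,\dots,y_d\}\subseteq V_{i-1}$ carries $\ge(d-1)c-160d^3$ naughty recolorings, contradicting (B). Two caveats: your ``alternating between two disjoint $(d-1)$-element halves'' description is not literally correct when some $y_j$ is repeated among the $z$'s (the post-block colour set need not equal $B$), but this does not matter since only avoidance of $C^*$ is needed for Condition~1. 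For Condition~2 your ``halves'' argument handles consecutive $z$'s within a block but says nothing about the cross-block pair $z_d,z_2'$ in $\mathcal{S}_{|X}$; the clean fix is to invoke Lemma~\ref{lemma: cause chain save}: any cause-pair among $\{y_2,\dots,y_d\}$ yields a saved recoloring for $x$, of which there are $O(d)$, so only $O(d^2)$ of the $z$-recolorings can fail Condition~2.

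Your plan for the \emph{naughty bound} on cliques $X\ni x$, however, has a genuine gap. Boundary accounting is far too weak: the last $3d+4$ steps of $\mathcal{S}_{|X}$ give $O(d)$ non-naughty recolorings, not $160d^3+1$. Nor does ``recolorings embedded in tight rotations fail Condition~2'' work---in the first part you just argued the \emph{opposite} (rotations produce naughty recolorings). The paper's actual argument is substantially different and requires two separate cases. First one shows that if $X$ has $\ge(d-1)c-160d^3$ naughty recolorings then $x$ alone has $\ge c-160d^3$ naughty recolorings, and re-applies Lemma~\ref{lemma: refine sequence} (now with $p\approx160d^3$) both to $x$ and to each $y_i$, obtaining $\ge c-2^{16}d^7$ patterns $\mathcal{P}$ in which all recolorings of $x$ and of every $y_i$ are rotating. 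Then:
\begin{itemize}
\item If $y_1\in X$: in every such $\mathcal{P}$ the recoloring of $x$ is caused by $y_1\in X$, which (via Condition~2) prevents it from being naughty; hence $x$ has at most $2^{16}d^7$ naughty recolorings and $X$ has at most $(d-2)c+2^{16}d^7<(d-1)c-160d^3$.
\item If $y_1\notin X$, say $X=\{x,y_2,\dots,y_d\}\setminus\{y_\ell\}$: a colour analysis using Observation~\ref{obs: tight recoloring property 2} on the tight recoloring of $y_1^{(1)}$ shows that a naughty recoloring $x^{(1)}$ forces $y_\ell$ to occur \emph{twice} among $z_2',\dots,z_d'$; since $y_\ell$ is recolored at most $c$ times, this caps the naughty $x$-recolorings at $c/2+2^{16}d^7$, giving the same contradiction.
\end{itemize}
The second case is the delicate one and is entirely absent from your plan; without it the induction does not close.
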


\noindent
\textbf{Proof of Claim~\ref{claim: naughty}:}
By induction on $|V(G)|$. It is trivial
when $|V(G)| \le d$.
Assume the claim is true for $|V(G)| \le n-1$, now consider when $|V(G)| = n$.
Let $v_1, \ldots, v_n$ be the perfect elimination ordering of $G$.
Then the claim holds for $G_{n-1}$.
That is, every vertex in $G_{n-1}$ is recolored at most $c$ times and for each $(d-1)$-clique $X$ in $G_{n-1}$, there are at most $(d-1)c-160d^3-1$ naughty recolorings.
We first prove that $v_n$ is recolored at most $c$ times.

If $v_n$ is recolored at least $c+1$ times, from Lemma~\ref{lemma: save inequality}, we have $|N^-(v_n)|=d$.
Write $x = v_n$ and $N^-(x) = \{ y_1, \ldots, y_d \}$ where $y_1$ has the largest order among all $v_i, 1 \le i \le d$.
From Lemma~\ref{lemma: save inequality}, we have that $y_i$ is recolored at least $c - d$ times.
By the property of perfect elimination ordering and Lemma~\ref{lemma: save inequality}, we have $|N^-(y_1)| = d$ and $y_i \in N^-(y_1), 2 \le i \le d$.
Write $N^-(y_1) = \{y_2, \ldots, y_d, w\}$.

By Lemma~\ref{lemma: refine sequence}, the patterns~(denoted by $\mathcal{P}$)
\[
    y_1w(z_2\ldots z_d)x^{(1)}y_1^{(1)}w(z_2'\ldots z_d') xy_1w(z_2''\ldots z_d'')xy_1w(z_2'''\ldots z_d''')xy_1
\]
appear at least $c-32d^2$ times in $\mathcal{S}_{|N^-[y_1]\cup \{x\}}$ where $z_j, z_j', z_j'',z_j''' \in \{y_2, \ldots, y_d\}, 2 \le j \le d$.
And at most $32d^2+1$ recolorings of $x$ is not rotating.
Then we have at least $c - 160d^2$ pattern $\mathcal{P}$ in which all the recolorings of $x$ are rotating.
Let $X = \{y_2, \ldots, y_d\}$, then $X$ is a $(d-1)$-clique from perfect elimination ordering.
Since the recoloring of $x$ is rotating and according to our analysis in proof of statement 3 in Lemma~\ref{lemma: refine sequence}, the colors of $x, y_1$ and $w$ are looping. 
Let $c_1,c_2,c_3$ denote the colors of $x,y_1,w$ before the recoloring of $x^{(1)}$.
Hence, the three colors $\{c_1,c_2,c_3\}$ never appear in $X$ which implies we have at least $(d-1)$ naughty recolorings~($z_2,\ldots,z_d$) in each $\mathcal{P}$.
And considering all such patterns, each naughty recoloring found in this way only counts once.
In total, we have at least $(d-1)c - 160d^3$ naughty recolorings in $\mathcal{S}_{|X}$ which contradicts our assumption.

It remains to prove that for each $(d-1)$-clique $X$ in $G$, there are at most $(d-1)c-160d^3-1$ naughty recolorings.
We only need to consider the cliques containing $x = v_n$.
If we have a $(d-1)$-clique $X$ containing $v_n$ and there are at least $(d-1)c-160d^3$ naughty recolorings, then  there are at least $c-160d^3$ naughty recolorings of $x$ due to that each vertex in $X\setminus \{x\}$ is recolored at most $c$ times.
Apply Lemma~\ref{lemma: refine sequence}, then we have that the pattern~(denoted by $\mathcal{P}$)
\[
    y_1w(z_2\ldots z_d)x^{(1)}y_1^{(1)}w(z_2'\ldots z_d') xy_1w(z_2''\ldots z_d'')xy_1w(z_2'''\ldots z_d''')xy_1
\]
appears at least $c-2^{10}d^5$ times in $\mathcal{S}_{|N^-[y_1]\cup \{x\}}$ where $z_j, z_j', z_j'',z_j''' \in \{y_2, \ldots, y_d\}, 2 \le j \le d$.
And at most $2^{13}d^5$ recolorings of $x$ is not rotating.
By Lemma~\ref{lemma: save inequality}, each $y_i, 1 \le i \le d$, is recolored at least $c - 2^{9}d^4$ times.
Again, apply Lemma~\ref{lemma: refine sequence} to $y_i, 1 \le i \le d$, at most $2^{14}d^6$ recolorings of $y_i$ is not rotating.
Then we have that pattern $\mathcal{P}$ appears at least $c - 2^{16}d^7$ times in which all the recolorings of $x$ and $y_i, 1 \le i \le d$, are rotating.

For the case $y_1 \in X$, at least $c - 2^{16}d^7$ recolorings of $x$ is caused by $y_1$.
Thus, at most $2^{16}d^7$ recolorings of $x$ is naughty~(since a naughty recoloring of $x$ can not be caused by $y_1$ and $x$ is recolored at most $c$ times).
Hence there are at most $(d-2)c + 2^{16}d^7$ naughty recolorings in $X$.
A contradiction arises since $(d-2)c + 2^{16}d^7 < (d-1)c - 160d^3$.

Then we consider the case when $y_1 \notin X$.
Assume $X = \{x, y_2, \ldots, y_d\} \setminus \{y_{\ell}\}$.
Now we consider the first recoloring of $x^{(1)}$ in $\mathcal{P}$.
Assume before this recoloring, the color of $x$ is $1$, the color of $y_i$ is $i+1$ for any $1 \le i \le d$, and $x$ is going to be recolored by $d+2$ which is the color of $w$ according to our analysis in proof of statement 3 in Lemma~\ref{lemma: refine sequence}.
Here we have the same configuration as Figure~\ref{fig: sequence 2}.
Whatever $y_{\ell}$ is, the $3d-4$ recolorings after $x^{(1)}$ in $\mathcal{S}_{|X}$ contains three recolorings of $x$.
Since the recoloring of $x^{(1)}$ is rotating, the colors $d+2$ and $2$ are used in $X$ in the $3d-4$ steps after $x^{(1)}$ in $\mathcal{S}_{|X}$.

If the recoloring of $x^{(1)}$ is naughty, note that before this step, $X$ has colors $\{1,3,4,\ldots,d+1\}\setminus\{\ell+1\}$.
Then there are three colors in $\{\ell+1\}\cup \{d+3, d+4, \ldots, 2d+1\}$ that are not used in $X$ in the $3d-4$ recolorings after $x^{(1)}$ in $\mathcal{S}_{|X}$.
Since the recoloring of $y_1^{(1)}$ is tight, by Observation~\ref{obs: tight recoloring property 2}, $z_2', \ldots, z_d'$ use up all colors in $\{d+3, \ldots, 2d+1\}$.
The only possibility is that there are $2 \le i_1, i_2 \le d$ such that $z_{i_1}' = z_{i_2}' = y_{\ell}$.
The three unused colors are the color $\{\ell+1\}$ and the colors of $y_{\ell}$ in the two recolorings $z_{i_1}', z_{i_2}'$.
Then for every naughty recoloring of $x^{(1)}$ in $\mathcal{P}$, we can find two occurrences of recoloring of $y_{\ell}$ in $\{z_2', \ldots, z_d'\}$. And each occurrence of $y_{\ell}$ is counted at most once. 
Then there is at most $c/2 + 2^{16}d^7$ such naughty recolorings of $x$ since there are at most $c$ recoloring of $y_{\ell}$.

Therefore, there is at most $(d-2)c + c/2 + 2^{16}d^7$ naughty recolorings in $\mathcal{S}_{|X}$.
This leads to a contradiction since $(d-2)c + c/2 + 2^{16}d^7 < (d-1)c - 160d^3$.
\hfill $\blacksquare$\par

Given any $d$-degenerate and chordal graph $G$ and any two $t$-coloring~($t=2d+1$) $\alpha$ and $\beta$ of $G$.
We can first obtain a perfect elimination ordering of $G$, then apply the best choice algorithm to obtain the best choice recoloring sequence from $\alpha$ to $\beta$.
By Claim~\ref{claim: naughty}, each vertex is recolored at most $c$ times.
\hfill $\square$\par

\section{Discussion}\label{sec: discussion}

A natural problem arises here: is the bound in Theorem~\ref{thm: 2k+1 linear} and Corollary~\ref{cor: 2k+1 linear} tight?
The question was also asked by Bartier, Bousquet and Heinrich in~\cite{bartier2021recoloring}.
Let $\mu(k)$~(resp. $\nu(k)$) be the smallest integer such that for any graph $G$ of treewidth at most $k$~(resp. $k$-degenerate and chordal), the $t$-recoloring diameter of $G$ is at most linear if $t \ge \mu(k)$~(resp. $t \ge \nu(k)$).
Theorem~\ref{thm: 2k+1 linear} and Corollary~\ref{cor: 2k+1 linear} show that $\mu(k), \nu(k) \le 2k+1$.

In~\cite{bonamy2014reconfiguration}, Bonamy, Johnson, Lignos, Patel and Paulusma~constructed a graph $G$ of treewidth $k$ and chordal whose $(k+2)$-recoloring diameter is at least $\Omega(|V(G)|^2)$.
This shows $\mu(k),\nu(k) \ge k+3$.
However, when $t \ge k+3$, the example they provided has at most linear $t$-recoloring diameter. Thus, their construction does not improve the lower bound of $\mu(k)$ and $\nu(k)$ any further.

We believe that $\mu(k) = \nu(k) = k+3$, but new ideas may be required to prove it.

\section*{Acknowledgement}

The authors would like to thank the referee for their careful reading and valuable comments which help to improve the presentation of this paper a lot.
M. Lu is supported by the National Natural Science Foundation of China~(Grant 12571372).


\end{document}